\sodef\spred{}{.2em}{.9em plus.4em}{1em plus.1em minus.1em}
\newbox\mybox
\def\overtag#1#2#3{\setbox\mybox\hbox{$#1$}\hbox to
  0pt{\vbox to 0pt{\vglue-#3\vglue-\ht\mybox\hbox to \wd\mybox
      {\hss$\ss#2$\hss}\vss}\hss}\box\mybox}
\def\undertag#1#2#3{\setbox\mybox\hbox{$#1$}\hbox to 0pt{\vbox to
    0pt{\vglue#3\vglue\ht\mybox\hbox to \wd\mybox
      {\hss$\ss#2$\hss}\vss}\hss}\box\mybox}
\def\lefttag#1#2#3{\hbox to 0pt{\vbox to 0pt{\vss\hbox to
      0pt{\hss$\ss#2$\hskip#3}\vss}}#1}
\def\righttag#1#2#3{\hbox to 0pt{\vbox to 0pt{\vss\hbox to
      0pt{\hskip#3$\ss#2$\hss}\vss}}#1}
\let\ss\scriptstyle
\def\Dot{\lower.2pt \hbox to 3.5pt{\hss$\bullet$\hss}}
\def\Circ{\lower.2pt \hbox to 3.5pt{\hss$\circ$\hss}}
\def\splicediag#1#2{\xymatrix@R=#1pt@C=#2pt@M=0pt@W=0pt@H=0pt}
\newcommand\lineto{\ar@{-}}
\newcommand\dashto{\ar@{--}}
\newcommand\dotto{\ar@{.}}
\newtheorem{thm}{Theorem}[section]
\newtheorem{prop}[thm]{Proposition}
\newtheorem{cor}[thm]{Corollary}
\newtheorem{thm*}{Theorem}
\theoremstyle{definition} 
\newtheorem{defn}[thm]{Definition} 
\newtheorem{ex}[thm]{Example}
\newcommand{\C}{\mathbbm{C}}
\newcommand{\morf}[4][\to]{ #2 \colon #3 #1 #4}
\newcommand{\inv}{^{-1}}
\newcommand{\M}[1]{M^{ #1 }_{m,n}}
\newcommand{\G}{\operatorname{Gr}}
\newcommand{\Tj}{\operatorname{Tjur}}
\newcommand{\Coker}{\operatorname{Coker}}
\newcommand{\im}{\operatorname{Im}}
\newcommand{\Na}{\operatorname{Nash}}
\newcommand{\Sp}[1]{\operatorname{Span}\{ #1\}}
\newcommand{\rank}{\operatorname{rank}}
\newcommand{\id}{\operatorname{id}}
\newcommand{\codim}{\operatorname{codim}}
\newcommand{\Gl}{\operatorname{GL}}
\renewcommand{\phi}{\varphi}
\renewcommand{\epsilon}{\varepsilon}
\begin{document}

\bibliographystyle{alpha}

\title{On Tjurina Transform and Resolution of Determinantal Singularities}
\author{Helge M\o{}ller Pedersen}
\address{ICMC-USP}
\email{helge@imf.au.dk}
\keywords{Resolution of singularities, Determinantal singularities}
\subjclass[2010]{14B05, 32S05, 32S45}
\begin{abstract} 
Determinantal singularities are an important class of singularities,
generalizing complete intersections, which recently have seen a large
amount of interest. They are defined as preimage of $\M{t}$ the sets of
matrices of rank less than $t$. The linear algebraic
structure $\M{t}$ gives rise to some interesting structures on
determinantal singularities. In this
article we will focus on one of these, namely the \emph{Tjurina
  transform}. We will show some properties of it, and discuss how it
can and how can not be used to find resolutions of determinantal
singularities.
\end{abstract}
\maketitle

\section{Introduction}

Hypersurface singularities have in general been the starting point of
singularity
theory. They have some very good properties, one of the most important
is the existence of the Milnor fibration \cite{milnor}. The Milnor
fibration makes it possible to define the Milnor number $\mu$, which is
a very important invariant. So a goal in singularity theory is to find
more general families of singularities, for which it is possible to
define the Milnor number. A classical example of a generalization, for
which the Milnor number can be defined, is the complete
intersections. Determinantal singularities are a generalization of the
complete intersections, they are defined as the preimage of the
set of $m\times n$ matrices of rank less than $t$ under certain
holomorphic maps. They have seen a lot of interest lately, there have
been several different ways to define the Milnor number of certain
classes of determinantal varieties by Ruas and da Silva Pereira
\cite{cedinhamiriam}, Damon and Pike \cite{damonpike} and
Nu{\~n}o-Ballesteros, Or{\'e}fice-Okamoto and Tomazella
\cite{NunoOreficeOkamotoTomazella}, Ebeling and
Gusein-Zade defined the index of a 1-form \cite{ebelingguseinzade},
and in addition to these the deformation theory has been studied in
\cite{gaffenyrangachev}.

In this article we study other aspects of determinantal
singularities, not directly related to deformation theory, namely,
transformations and resolutions. They played a very important role in
\cite{ebelingguseinzade}, and the Tjurina transform, which will be one
of our main subjects, was also studied intensely for the case
Cohen-Macaulay codimension 2 by Frühbis-Krüger and Zach in
\cite{fruhbiskrugerzach}. 

We first study the Tjurina, Tjurina transpose and Nash
transformations for the model determinantal singularity in section
\ref{modelsing}. This was already done in \cite{ebelingguseinzade},
but we will need this as motivation for introducing the
transformations for general determinantal singularities. We also
explore how these transformations are related and how they are not,
and give a description of their homotopy type. We introduce the
Tjurina transform (and its transpose) for general determinantal
singularities in section \ref{generalsing}, give some general
properties, for example that the Tjurina transform of a complete
intersection is itself, and give some methods to
find the Tjurina transform. In section \ref{completeintersection} we
show that under some general assumptions the Tjurina transform or its dual
is a complete intersection. This unfortunately mean that Tjurina transform
is not going to provide resolutions in general. At last we illustrate
in section \ref{hypersurface} that by changing the
determinantal type of the Tjurina transform of certain hypersurface
singularities, we can continue the process of taking Tjurina
transform, and in the end reach a resolution. Section
\ref{preliminaries} introduces the determinantal singularities and
notions of transformations used throughout the article. I wish to
thank Maria Ruas for introducing me to the subject of determinantal
singularities and for many fruitful conversations during the preparation
of this article. The author was supported by FAPESP grant 2015/08026-4.  

\section{Preliminaries}\label{preliminaries}

In this section we give the basic definitions and properties of
determinantal singularities, and transformations we will need. We
will in general follow the notation Ebeling and Gusein-Zade used in
\cite{ebelingguseinzade}. 

\subsection{Determinantal Singularities}

Let $M_{m,n}$ be the set of $m\times n$ matrices over $\C$, then we
define the \emph{model determinantal singularity of type} $(m,n,t)$,
denoted by $\M{t}$, to be the subset of $M_{m,n}$ consisting of matrices $A$ of
$\rank(A)< t$. $\M{t}$ has a natural structure of an irreducible algebraic
variety, with defining equations given by requiring that the
$t\times t$ minors have to vanish. The dimension of $\M{t}$ is
$mn-(m-t+1)(n-t+1)$. The model determinantal
singularities are often called generic determinantal singularities as
for example in \cite{cedinhamiriam}. 

The singular set of $\M{t}$ is $\M{t-1}$ and the decomposition of
$\M{t}=\bigcup_{i=1}^{t}(\M{i}-\M{i-1})$, where $\M{0}:=\emptyset$, is a
Whitney stratification.

Let $\morf{F}{U\subset\C^N}{M_{m,n}}$ be a map with holomorphic entries. We
say that $X:=F\inv(\M{t})$ is a \emph{determinantal singularity of
  type} $(m,n,t)$ if $\codim(X)=\codim(\M{t})=(m-t+1)(n-t+1)$. $X$ has
the structure of an irreducible algebraic variety, with equations
defined by the vanishing of the $t\times t$ minors of the matrix
$F(x)$. The singular set of $X$ is $F\inv(\M{t-1})$, the decomposition
$X=\bigcup_{i=1}^t X^i$, where $X^i=F\inv(\M{i}-\M{i-1})$, is a
  stratification. The group $\Gl_m(\mathcal{O}_N) \times \Gl_n(\mathcal{O}_N)$,
  acting by conjugation, acts on the set of determinantal varieties of type
  $(m,n,t)$ by isomorphism.   

If $F$ intersects the strata $\M{i}-\M{i-1}$ transversally at $F(x)$, then
the singularity a $x$ only depends upon $\rank(F(x))$. We, therefore,
call such a point \emph{essentially nonsingular}. This naturally
leads to the next definition.
\begin{defn}
Let $X$ be a determinantal singularity defined by the map $F$. Then
$X$ is an \emph{essentially isolated determinantal singularity} (or EIDS
for short) if all points $x\in X-\{ 0\}$ are essentially nonsingular.
\end{defn}

An EIDS is of course not smooth, but the singularities away from $\{0 \}$
are controlled, i.e.\ they only depend on the strata they belong to. An
example of an EIDS is any complete intersection given the type of a
$(1,m,1)$ (or $(m,1,1)$) determinantal singularity.

If $(X,0)$ is a determinantal singularity of type $(m,n,t)$ given by
$\morf{F}{U\subset \C^N}{M_{m,n}}$ satisfying $F(0)\neq 0$, then one can find
another map $\morf{F'}{U'\subset\C^N}{M_{m-s,n-s}}$ with $F'(0)=0$
such that $F'$ gives
$(X,0)$ the structure of a determinantal singularity of type
$(m-s,n-s,t-s)$ where $U$ and $U'$ are open
neighbourhoods of the origin and $s=\rank F(0)$. This can be done by conjugating
$F$ to be on the form $\left(
\begin{array}{@{} c | c @{}}
\id_s & 0 \\ \hline
0 & F'
\end{array}
\right)$ in a neighbourhood of $0$.

\subsection{Transformations}

As the article is about Tjurina transforms and resolutions of
determinantal singularities, we will define what we mean by a transformation.
\begin{defn}\label{transform}
Let $X$ be a variety and $A\subset X$ a closed subvariety, then a
\emph{transformation of} $(X,A)$ is a variety $\widetilde{X}$ together with a
proper map $\morf{\pi}{\widetilde{X}}{X}$, such that
$\morf{\pi}{\pi\inv(X-A)}{X-A}$ is an isomorphism and
$\overline{\pi\inv(X-A)}=\widetilde{X}$.
\end{defn}
The last requirement insures that $\dim(\pi\inv(A))<\dim(X)$.

A resolution of $(X,Sing X)$ is then just a transformation where
$\widetilde{X}$ is smooth. We want to compare the different
transformations, so we define a map between transformations as follows.
\begin{defn}\label{mapoftransformations}
Let $\morf{f}{T_1}{T_2}$ be a map between two different transformations
$\morf{\pi_i}{(T_i,E_i)}{(X,A)}$ of the same space and
subspace. Then we call $f$ a map of transformations if
$\pi_1=\pi_2\circ f$. We call a map of transformation an isomorphism,
if it is an isomorphism of varieties.
\end{defn}

\section{Resolutions of the Model Determinantal Singularities}\label{modelsing}

In \cite{ebelingguseinzade} Ebeling and Gusein-Zade introduce 3
different natural ways to resolve 
the model determinantal singularities $\M{t}$. The first is the same
as the Tjurina transform of $(\M{t},\M{t-1})$ used by Tjurina
\cite{tjurina}, Van Straten \cite{vanstraten} and 
Frühbis-Krüger and Zach in \cite{fruhbiskrugerzach}. It is
defined as the following set in $M_{m,n}\times \G (n-t+1,n)$: 
\begin{align*}
\Tj(\M{t}): &= \{ (A,W)\in M_{m,n}\times \G (n-t+1,n)\ \vert\
A(W)=0\}\\
&= \{ (A,W)\in M_{m,n}\times \G (n-t+1,n)\ \vert\ W \subset
\ker(A)\}
\end{align*}
by considering $A\in \M{t}$ as a linear map $\morf{A}{\C^n}{\C^m}$. It
is shown in \cite{arbarellocornalbagriffiths}, that this is a smooth
variety. Consider $\morf{\pi}{\Tj(\M{t})}{\M{t}}$ the restriction of
the projection to the first factor. Then over the regular part of
$\M{t}$ we have that the map $A\to (A,\ker A)$ is an inverse to $\pi$,
hence $\morf{\pi}{\Tj(\M{t})}{\M{t}}$ is a resolution. Corollary 3.3
in \cite{fruhbiskrugerzach} shows that their definition gives the same as
this one, their proof also works for general $n,m$.

The second resolution is as the Tjurina, but considering $A\in \M{t}$
as a linear map $\morf{A}{\C^m}{\C^n}$. This is of course the map given
by the transpose of $A$, so we get the following:
\begin{align*}
\Tj^T(\M{t}): &= \{ (A,W)\in M_{m,n}\times \G (m-t+1,m)\ \vert\
A^T(W)=0\}\\
&= \{ (A,W)\in M_{m,n}\times \G (m-t+1,m)\ \vert\ W \subset
\ker(A^T)\}\\
&= \{ (A,W)\in M_{m,n}\times \G (m-t+1,m)\ \vert\ W \subset
\Coker(A)\}
\end{align*}  
It is clear from the definition that this is also a smooth variety,
the same proof as in the case of Tjurina transform works. If one
chooses an inner product on $\C^m$, then one gets that $ W \subset
\Coker(A)$ is the same as $\im(A)\subset W^\bot $ where $W^\bot$ is
the orthogonal complement with respect to the inner product. The choice
of inner product also gives an isomorphism between $\G(m-t+1,m)$ and
$\G(t-1,m)$ defined by sending $W$ to $W^\bot$. Using this we get that
this transform is also:
\begin{align}
\Tj^T(\M{t}) &= \{ (A,W)\in M_{m,n}\times \G (t-1,m)\ \vert\
\im(A)\subset W\}.\label{tjurim}
\end{align}   

The third resolution considered by Ebeling and Gusein-Zade is the Nash
transform of $\M{t}$. In section one of \cite{ebelingguseinzade} they
show how to get the Nash transform which can be stated as the
following proposition:
\begin{prop}\label{nashcar}
For a model determinantal singularity the Nash transform can be given
as the following:
\begin{align*}
\Na(\M{t}) = \{  (A,W_1,W_2)\in M_{m,n}\times &\G(n-t+1,n)\times
\G(t-1,m)\\
 &\vert\ \ker(A)\supset W_1 \text{ and } \im(A) \subset W_2\}.
\end{align*}
\end{prop}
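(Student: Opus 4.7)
The plan is to use the standard description of the Nash transform as the closure, inside $\M{t}\times \G(d,M_{m,n})$ with $d=\dim\M{t}=(t-1)(m+n-t+1)$, of the graph of the Gauss map $A\mapsto(A,T_A\M{t})$ over the smooth stratum $\M{t}-\M{t-1}$. The entire argument rests on computing $T_A\M{t}$ at such smooth points and then repackaging the target Grassmannian factor so as to replace the abstract tangent plane by the concrete data $(\ker A,\im A)$.

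First I would compute the tangent space at an arbitrary smooth point $A$, i.e.\ a matrix of rank exactly $t-1$. Using the $\Gl_m\times\Gl_n$-action to bring $A$ into the normal form with an $(t-1)\times(t-1)$ identity block in the upper-left corner and zero elsewhere, and then differentiating the vanishing of the $t\times t$ minors, one obtains the well-known formula
\[
T_A\M{t}=\{B\in M_{m,n}\ \vert\ B(\ker A)\subset \im A\}.
\]
A short dimension count confirms that this subspace has dimension $(t-1)(m+n-t+1)=\dim\M{t}$. The crucial point is that $T_A\M{t}$ depends on $A$ only through the pair $(\ker A,\im A)\in\G(n-t+1,n)\times\G(t-1,m)$.

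Next I would observe that the assignment $(W_1,W_2)\mapsto\{B\in M_{m,n}:B(W_1)\subset W_2\}$ is injective, since $W_1$ is recovered as the common kernel, and $W_2$ as the sum of images, of the matrices in the target subspace. This lets us re-embed the Nash transform into $M_{m,n}\times\G(n-t+1,n)\times\G(t-1,m)$ as the Zariski closure $Z$ of the graph $\{(A,\ker A,\im A):A\in\M{t}-\M{t-1}\}$.

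Finally I would identify $Z$ with the set $Y$ described in the statement. The incidence conditions $A(W_1)=0$ and $\im A\subset W_2$ are algebraic, so $Y$ is closed. The projection onto the Grassmannian factor exhibits $Y$ as the total space of a vector bundle over $\G(n-t+1,n)\times\G(t-1,m)$ whose fiber over $(W_1,W_2)$ is the space of linear maps $\C^n/W_1\to W_2$, of constant dimension $(t-1)^2$; in particular $Y$ is smooth and irreducible, and automatically $\rank A\le t-1$ since $\im A\subset W_2$, so $Y$ really maps into $\M{t}$. A dimension count gives
\[
\dim Y=(t-1)(n-t+1)+(t-1)(m-t+1)+(t-1)^2=(t-1)(n+m-t+1)=\dim\M{t}.
\]
Over the smooth stratum, the containments $W_1\subset\ker A$ and $W_2\supset\im A$ force equality by dimension, so $Y$ contains the graph and agrees with it above $\M{t}-\M{t-1}$. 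Combined with the irreducibility of $Y$ and the matching dimension, this forces $Z=Y$. The main obstacle is really the tangent-space computation, together with the injectivity observation that separates $(W_1,W_2)$ from the resulting subspace; once these are in place the rest is an irreducibility plus dimension check.
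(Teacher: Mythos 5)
Your overall architecture is sound: the tangent-space formula, the dimension counts, and the vector-bundle description of the incidence variety $Y$ are all correct and match what the proof needs. There are, however, two weak points in the step where you transport the Nash transform from $M_{m,n}\times\G(d,mn)$ to $M_{m,n}\times\G(n-t+1,n)\times\G(t-1,m)$. First, your justification of the injectivity of $(W_1,W_2)\mapsto\alpha(W_1,W_2):=\{B\ \vert\ B(W_1)\subset W_2\}$ is wrong as stated: $W_1$ is \emph{not} the common kernel of the matrices in $\alpha(W_1,W_2)$, and $W_2$ is \emph{not} the sum of their images. For example, with $m=n=t=2$ and $W_1=W_2=\Sp{e_1}\subset\C^2$, the subspace $\alpha(W_1,W_2)$ contains the identity matrix, so the common kernel is $\{0\}$ and the sum of the images is all of $\C^2$. (A recovery only works in a coupled way: $W_1=\{v\ \vert\ Bv\in W_2 \text{ for all } B\in\alpha(W_1,W_2)\}$ presupposes knowledge of $W_2$, and vice versa.) Injectivity is nevertheless true; the paper proves it by exhibiting, for $(W_1,W_2)\neq(V_1,V_2)$, an explicit rank-one matrix lying in one of the two subspaces but not in the other, treating the cases $W_1\neq V_1$ and $W_1=V_1$, $W_2\neq V_2$ separately. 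Second, injectivity alone does not let you ``re-embed the Nash transform'': to conclude that the closure of the Gauss-map graph corresponds to the closure of the graph of $A\mapsto(\ker A,\im A)$ you need $\alpha$ to be a closed embedding (or at least continuous and closed), which the paper establishes by proving continuity of $\alpha$ and invoking compactness of the Grassmannians. Both repairs are routine, but as written these steps do not stand.

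For the final identification of the closure with $Y$, your route genuinely differs from the paper's and is arguably cleaner: the paper shows directly that every $(A,V,W)$ with $V\subset\ker A$ and $\im A\subset W$ is a limit of graph points by perturbing $A$ to $A+\tfrac{1}{i}A'$ for a suitable matrix $A'$ of rank $t-1-\rank A$, whereas you deduce equality from the facts that $Y$ is irreducible (being the total space of a vector bundle over an irreducible base) of dimension equal to $\dim\M{t}$ and contains the irreducible closed set $Z$ of the same dimension. Your argument trades the explicit construction of limiting sequences for standard facts about dimensions of proper closed subvarieties; the paper's version is more hands-on and has the minor benefit of exhibiting concretely how a given point of the incidence variety is approximated from the regular stratum.
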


\begin{proof}
In \cite{arbarellocornalbagriffiths} they show that for
$A\in\M{t}-\M{t-1}$, that is the
regular points, that $T_A\M{t}=\{ B\in M_{m,n}\ \vert\ B\big( \ker
(A)\big)\subset \im(A) \}$. Consider the map
$\morf{\alpha}{\G (n-t+1,n)\times \G (t-1,m)}{\G (d^t_{m,n},mn)}$, where
$d^t_{m,n} : = mn-(m-t+1)(n-t+1)=\dim( \M{t})$, given by
$\alpha(W_1,W_2)=\{ B\in M_{m,n}\ \vert\ B(W_1)\subset
W_2\}$. 

We will first show that $\alpha$ is injective. Assume that
there exist two pairs $(W_1,W_2)$ and $(V_1,V_2)$ such that
$\alpha(W_1,W_2)=\alpha(V_1,V_2)$. Assume that $W_1\neq V_1$, let
$v_1\in V_1$ and $v_1\notin W_1$, since $\dim(W_1)=\dim(V_1)$ such $v_1$
exists, and choose $v_2\notin V_2$. Define the linear map $B$ as the
map that sends $av_1$ to $av_2$ and anything else to $0$. Then
$B(W_1)=\{0\}\subset W_2$ and hence $B\in \alpha(W_1,W_2)$, but
$B(V_1)=\Sp{v_2}\not\subset V_2$, so $B\notin \alpha(V_1,V_2)$ so we
have a contradiction. Assume now that there exist pairs $(W_1,W_2)$ and
$(W_1,V_2)$ such that $\alpha(W_1,W_2)=\alpha(W_1,V_2)$. Assume that
$W_2\neq V_2$, let $v_1\in W_1$ and let $v_2\in V_2$ and $v_2\notin
W_2$, since $\dim(W_2)=\dim(V_2)$ such $v_2$ exists. Define $B$ as the
linear map that sends $av_1$ to $av_2$. Then $B(W_2)=\Sp{v_2}\subset
V_2$ so $B\in \alpha(W_1,V_2)$, but $\Sp{v_2}\not\subset W_2$ so
$B\notin \alpha(W_1,W_2)$ so we have a contradiction. This shows that
$\alpha$ is injective.

Next we will show that $\alpha$ is continuous. Let
$(V_i,W_i)\in\G(n-t+1,n)\times\G(t-1,m)$ be a convergent sequence and
let $(V,W)=\lim (V_i,W_i)$. Let $\mathcal{B}_i=\alpha(V_i,W_i)$, and
choose a convergent subsequence $\mathcal{B}'_i$ which exists because
$\G(d^t_{m,n},mn)$ is compact. Let $\mathcal{B}=\lim
\mathcal{B}'_i$, choose $B\in\mathcal{B}$ and $B_i\in\mathcal{B}'_i$ a
sequence of matrices converging to $B$. Let $v\in V$ and $v_i\in V_i$
a sequence converging to $v$, set $w_j=B_jv_j$ for any $j$ where $B_j$
is defined. Now since $B_j$ and $v_j$ converge, $w_j$ converges to
$w=Bv$, but $w_j\in W_j$ and hence its limit is in $W$. So for all
$v\in V$ and all $B\in\mathcal{B}$ $Bv\in W$, hence
$\mathcal{B}\subset \alpha(V,W)$, but since $\dim
\big(\mathcal{B} \big) =\dim\big( \alpha(V,W)\big)$ we have that
$\mathcal{B}=\alpha(V,W)$. So any convergent subsequence of
$\mathcal{B}_i$ converges to $\alpha(V,W)$, this implies that
$\mathcal{B}_i$ converges to $\alpha(V,W)$ since $\G(d^t_{m,n},mn)$ is
compact. Therefore, $\lim \alpha(V_i,W_i) =\alpha(\lim
(V_i,W_i))$ for all convergent sequences, hence $\alpha$ is continuous.

Since $\alpha$ is a continuous map from a compact space to a compact
space it is closed, and since it is injective it implies it is an
embedding.

Let $\morf{\beta}{(\M{t}- \M{t-1})}{M_{m,n}\times \G(n-t+1,n) \times
  \G(t-1,m)}$ be the map $\beta(A)=\big(A,\ker(A),\im(A)\big)$. Let
$\morf{\alpha'}{M_{m,n}\times\G(n-t+1,n)\times\G(t-1,m)}{
  M_{m,n}\times\G(d^t_{m,n},mn)}$ defined by
  $\alpha'(A,V,W)=(A,\alpha(V,W))$. Then 
$\alpha'\circ\beta(A)=(A, \mathcal{B})$, where
\begin{align*}
\mathcal{B} &=\alpha\big(\ker(A),\im(A)\big)=\{ B\in M_{m,n}\ \vert \
B\big(\ker(A)\big)\subset \im(A)\} = T_A \M{t}
\end{align*}
So $\alpha'\circ\beta$ is the same as the Gauss map on the regular
part of $\M{t}$. Then we have that
$\Na(\M{t})=\overline{ \alpha'\circ\beta (\M{t}-\M{t-1})}$. Since
$\alpha$ and hence $\alpha'$ is a closed embedding we have
$\Na(\M{t})=\alpha'\Big(\overline{\beta (\M{t}-\M{t-1})}\Big)$. Moreover,
since $\alpha'$ is an embedding it follows that $\Na(\M{t})$ is
homeomorphic to $\overline{\beta (\M{t}-\M{t-1})}$.

The last part of the proof is determining $\overline{\beta (\M{t}-\M{t-1})}$.
Now $\beta (\M{t}-\M{t-1})=\{(A,\ker A,\im A)\in M_{m,n}\times \G(n-t+1,n)\times
\G(t-1,m)\}$ and we want to show that the closure is  $\{  (A,V,W)\in M_{m,n}\times \G(n-t+1,n)\times
\G(t-1,m)\
 \vert\ \ker(A)\supset V \text{ and } \im(A) \subset W\}=\mathcal{N}$. First
 assume that $(A,V,W)\in \overline{\beta (\M{t}-\M{t-1})}$ is not in
 $\mathcal{N}$. This implies that that there is a $v\in V$ such that
 $Av\notin W$. Let $(A_i,V_i,W_i)$ be a sequence in $(A,V,W)\in
 \overline{\beta (\M{t}-\M{t-1})}$ converging to $(A,V,W)$ and $v_i\in
 V_i$ a sequence converging to $v$, then $A_iv_i$ converge to $Av$
 but $A_iv_i=0$ so this contradicts $Av\notin\mathcal{N}$. Let
 $(A,V,W)\in\mathcal{N}$ and let $r=\rank A$. Now $V\subset \ker A$,
 so let $V'\subset \C^n$ be the subspace satisfying $V\oplus V'= \ker
 A$, and $\im A\subset W$ so let $W'\subset \C^m$ be the subspace
 satisfying $\im A\oplus W'=W$. Let $A'$ be a matrix of rank $t-1-r$,
 such that $\ker A'\oplus V' =\C^n$ and $\im A' = W'$, such a matrix
 exist since $\dim V'=\dim W'=t-1-r$. Set $A_i=A+\tfrac{1}{i}A'$ then
 $\ker A_i= \ker A\bigcap \ker \tfrac{1}{i}A'=\ker A\bigcap\ker A'=V$
 and $\im A_i=\im A+\im \tfrac{1}{i}A'=W$. Hence $(A_i,V_i,W_i)
 := (A_i,V,W)$ is a sequence in $\beta (\M{t}-\M{t-1})$
 converging to $(A,V,W)$, so $\mathcal{N}\subset \overline{\beta
   (\M{t}-\M{t-1})}$ which finishes the proof. 
\end{proof}

An important consequence of this is the following:
\begin{cor}
$\Na(\M{t})$ is smooth.
\end{cor}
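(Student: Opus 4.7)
The plan is to use the explicit description from Proposition~\ref{nashcar} and exhibit $\Na(\M{t})$ as the total space of a holomorphic vector bundle over the smooth base $\G(n-t+1,n) \times \G(t-1,m)$. Concretely, I would consider the projection
$$\morf{p}{\Na(\M{t})}{\G(n-t+1,n)\times\G(t-1,m)}$$
given by $(A,W_1,W_2)\mapsto(W_1,W_2)$, and analyze its fibers.

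First I would identify the fiber over a point $(W_1,W_2)$. A matrix $A\in M_{m,n}$ satisfying $W_1\subset\ker A$ and $\im A \subset W_2$ is precisely a linear map $\C^n\to\C^m$ that factors as $\C^n\twoheadrightarrow\C^n/W_1 \to W_2 \hookrightarrow \C^m$. Hence the fiber is canonically isomorphic to $\Hom(\C^n/W_1,W_2)$, a complex vector space of dimension $(t-1)\cdot(t-1) = (t-1)^2$, since $\dim(\C^n/W_1)=n-(n-t+1)=t-1$ and $\dim W_2 = t-1$.

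Next I would check that $p$ has the structure of a holomorphic vector bundle. This is a standard construction: on a trivializing chart of the Grassmannian product, one chooses holomorphically varying complements $U_1$ of $W_1$ in $\C^n$ and uses the tautological subbundle $\mathcal{S}$ on $\G(t-1,m)$ together with the quotient bundle $\mathcal{Q}^\vee$ on $\G(n-t+1,n)$ to realize the family of fibers as the holomorphic bundle $\mathcal{Q}^\vee\otimes\mathcal{S}$. In particular, the local trivializations depend holomorphically on $(W_1,W_2)$.

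Since $\G(n-t+1,n)\times\G(t-1,m)$ is smooth and the total space of a holomorphic vector bundle over a smooth base is itself smooth, we conclude that $\Na(\M{t})$ is smooth. The only part requiring care is the vector bundle verification, which amounts to writing down explicit local coordinates on the Grassmannian charts and observing that the condition $(A(W_1)=0,\ \im A\subset W_2)$ cuts out a linearly varying subspace of $M_{m,n}$; this is routine once the fiber identification above is in hand.
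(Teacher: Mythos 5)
Your proof is correct and follows exactly the paper's own argument: project to the last two factors $\G(n-t+1,n)\times\G(t-1,m)$ and observe that this exhibits $\Na(\M{t})$ as the total space of a vector bundle over a smooth base. The paper states this in one sentence; you have simply filled in the fiber identification and the bundle verification, which is a welcome elaboration but not a different route.
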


\begin{proof}
Using the description of $\Na(\M{t})$ given in Proposition
\ref{nashcar} we get that the projection to the two last factors
$\G(n-t+1,n)\times \G(t-1,m)$ gives $\Na(\M{t})$ the structure of the
total space of a vector bundle over a smooth manifold.
\end{proof}

It follows from Definition \ref{mapoftransformations} and Proposition
\ref{nashcar}, that we have a map of transformations
$\morf{f}{\Na(\M{t})}{\Tj\M{t}}$ by setting $f(A,V,W)=(A,V)$ and a map
of transformations $\morf{f}{\Na(\M{t})}{\Tj^T\M{t}}$ by setting
$f(A,V,W)=(A,W)$ and using \eqref{tjurim}. These maps are never
isomorphism, as we will see later when we determine the homotopy type
of these spaces. Now finding maps between $\Tj\M{t}$ and $\Tj^T\M{t}$
turns out to be impossible by the following result:
\begin{prop}
There exist no continuous map of transformations between $\Tj\M{t}$ and
$\Tj^T\M{t}$.
\end{prop}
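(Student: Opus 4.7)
The plan is to derive a contradiction by constructing two sequences in $\Tj(\M{t})$ with a common limit whose images under a would-be continuous transformation map $f$ have different limits in $\Tj^T(\M{t})$. Throughout I assume the nontrivial case $t\geq 2$ and $\min(m,n)\geq t$ (otherwise both transformations degenerate). Suppose $\morf{f}{\Tj(\M{t})}{\Tj^T(\M{t})}$ is a continuous map of transformations, using the description of $\Tj^T(\M{t})$ from \eqref{tjurim}. Since the projections $\morf{\pi}{\Tj(\M{t})}{\M{t}}$ and $\morf{\pi^T}{\Tj^T(\M{t})}{\M{t}}$ are isomorphisms over the open stratum $\M{t}-\M{t-1}$, the compatibility $\pi=\pi^T\circ f$ forces $f(A,\ker A)=(A,\im A)$ for every $A$ of rank exactly $t-1$.

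Next, I would fix a single subspace $W_1\in \G(n-t+1,n)$ together with two distinct subspaces $W_2,W_2'\in \G(t-1,m)$; such a pair exists because $\dim \G(t-1,m)=(t-1)(m-t+1)>0$ under the assumption $m\geq t\geq 2$. I then construct two rank-$(t-1)$ matrices $A$ and $A'$ with $\ker A=\ker A'=W_1$ but $\im A=W_2$ and $\im A'=W_2'$. Concretely, take any complement $V_1$ of $W_1$ in $\C^n$ (necessarily of dimension $t-1$), and let $A$ vanish on $W_1$ while restricting to some linear isomorphism $V_1\to W_2$, with $A'$ defined analogously. Both sequences $(\tfrac{1}{i}A,W_1)$ and $(\tfrac{1}{i}A',W_1)$ then lie in $\Tj(\M{t})$ (since $W_1=\ker\tfrac{1}{i}A=\ker\tfrac{1}{i}A'$) and converge to $(0,W_1)$.

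Applying the forced formula on the regular stratum gives $f(\tfrac{1}{i}A,W_1)=(\tfrac{1}{i}A,W_2)\to(0,W_2)$ in $\Tj^T(\M{t})$, and analogously $f(\tfrac{1}{i}A',W_1)\to(0,W_2')$. Continuity of $f$ requires these two limits to coincide, contradicting $W_2\neq W_2'$. The non-existence of a continuous map $\Tj^T(\M{t})\to \Tj(\M{t})$ follows by the symmetric argument, fixing $W_2\in \G(t-1,m)$ and varying the kernel between two distinct elements of $\G(n-t+1,n)$. I do not foresee any serious obstacle beyond the elementary linear-algebra step of producing rank-$(t-1)$ matrices with arbitrary prescribed kernel and image of complementary dimensions; the conceptual content is that on the smooth stratum $\Tj(\M{t})$ and $\Tj^T(\M{t})$ record independent pieces of data (kernels versus images), and this independence cannot be reconciled continuously over the singular locus.
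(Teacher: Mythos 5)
Your proof is correct and follows essentially the same strategy as the paper's: the compatibility with the projections forces $f(A,\ker A)=(A,\im A)$ on the rank-$(t-1)$ stratum, and one then exhibits two sequences of such points with a common limit in $\Tj(\M{t})$ whose images converge to distinct points of $\Tj^T(\M{t})$. The only (immaterial) difference is that you scale fixed rank-$(t-1)$ matrices down to the zero matrix, whereas the paper perturbs a fixed rank-$(t-2)$ matrix in two different directions; both implement the same discontinuity argument.
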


\begin{proof}
We start by using \eqref{tjurim} to identify $\Tj^T(\M{t})$ with $\{
(A,W)\in M_{m,n}\times \G (t-1,m)\ \vert\ \im(A)\subset W\}$. Let
$\morf{f}{\Tj(\M{t})}{\Tj^T\M{t}}$ be a map of transformations, this
implies that over $\pi\inv(\M{t}-\M{t-1})$ we have that $f(A,\ker
A)=(A,\im A)$. Let $\{x_1,\dots,x_n\}$ be a basis of $\C^n$ and
$\{y_1,\dots, y_m\}$ be a basis for $\C^m$. Let $A$ be the matrix in
this basis of the linear map$A(x_1,\dots,x_n)=(x_1,\dots,x_{t-2},0,
\dots,0)$, notice that there is at least $2$ zeros at the end since
$t\leq m$. $\rank A=t-2$ hence $A\in\M{t-1}$. Let
$V=\Sp{x_t,\dots,x_{n}}$ then it is clear that $\ker A\supset V$.

We now define two different sequences of matrices $A_s^1$ and
$A_s^2$. $A_s^1(x_1,\dots,x_n) :=
(x_1,\dots,x_{t-2},\tfrac{1}{s}x_{t-1},0,0,\dots,0)$ and
$A_s^2(x_1,\dots,x_n) := (x_1,\dots,x_{t-2},0,\tfrac{1}{s}x_{t-1},0,\dots,0)$. It
is clear that $\ker A_s^i = V$ and $\lim_{s\to\infty} (A_s^i,V) = (A,V)$
for $i=1,2$. Since $A_s^i\in \M{t}-\M{t-1}$ we get that
$f(A_s^i,V) = (A_s^i,\im A_s^i)$. Now let
$W_1 := \Sp{y_1,\dots,y_{t-1}} = \im A_s^1$ and
$W_2 := \Sp{y_1,\dots,y_{t-2}, y_t} = \im A_s^2$. If $f$ is continuous,
then we have that $f(A,W)=f(\lim_{s\to\infty}
(A_s^i,V))=\lim_{s\to\infty} f(A_s^i,V) = (A,W_i)$ for $i=1,2$. But
$W_1\neq W_2$ hence $f$ can not be continuous. The argument that
there is no continuous map of transformations from $\Tj^T\M{t}$ to
$\Tj\M{t}$ is similar.  
\end{proof}

Next we determine the homotopy type of the transformations, and
the above shows that even in the case $\Tj\M{t}$ and $\Tj^T\M{t}$ are
homotopy equivalent they are not isomorphic.

\begin{prop}
Let $\morf{\pi}{(T(\M{t}),E)}{(\M{t},\M{t-1})}$ be one of the $3$
transforms discussed above. Then $T(\M{t})$ is homotopy equivalent to
$\pi\inv(0)$. 
\end{prop}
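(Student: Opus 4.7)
The plan is to exhibit each of the three transforms as the total space of a holomorphic vector bundle over a smooth compact base, with the fibre $\pi^{-1}(0)$ arising as the zero section. Since a vector bundle deformation retracts onto its zero section, this immediately gives the homotopy equivalence. The proof of the corollary after Proposition~\ref{nashcar} has already observed that $\Na(\M{t})$ carries such a vector bundle structure; we simply extend that viewpoint to all three transforms and identify the zero section in each case.

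For $\Tj(\M{t})$, consider the projection $\morf{p}{\Tj(\M{t})}{\G(n-t+1,n)}$ forgetting the matrix. The fibre over $W$ is $\{A\in M_{m,n}\ \vert\ A|_W=0\}$, which by the universal property factors linearly through $\C^n/W$ and is therefore the vector space $\operatorname{Hom}(\C^n/W,\C^m)$ of constant dimension $m(t-1)$. As $W$ varies these fibres glue into the bundle $\operatorname{Hom}(\mathcal{Q},\underline{\C^m})$, where $\mathcal{Q}$ is the tautological quotient bundle on the Grassmannian, so $\Tj(\M{t})$ is the total space of a holomorphic vector bundle. Its zero section is exactly $\{(0,W)\ \vert\ W\in\G(n-t+1,n)\}=\pi^{-1}(0)$, so the standard deformation retract $(A,W)\mapsto (sA,W)$, $s\in[0,1]$, gives the claimed homotopy equivalence. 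The case of $\Tj^T(\M{t})$ is identical, using the description \eqref{tjurim}: the projection to $\G(t-1,m)$ realises it as the vector bundle $\operatorname{Hom}(\underline{\C^n},\mathcal{S})$ for $\mathcal{S}$ the tautological subbundle, with zero section $\pi^{-1}(0)$.

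For $\Na(\M{t})$, the projection to the last two factors gives the bundle already noted in the corollary. Explicitly, the fibre over $(V,W)\in\G(n-t+1,n)\times\G(t-1,m)$ consists of matrices $A$ with $V\subset\ker A$ and $\im A\subset W$; such $A$ factor through the composite $\C^n\twoheadrightarrow\C^n/V\to W\hookrightarrow\C^m$, so the fibre is $\operatorname{Hom}(\C^n/V,W)$, of constant dimension $(t-1)^2$. The zero section coincides with $\pi^{-1}(0)=\{(0,V,W)\}$, and scaling $A$ to $0$ provides the deformation retraction onto it.

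The only point requiring care is verifying constancy of the fibre dimension, i.e.\ that each of these fibre conditions cuts out a subbundle rather than merely a closed subvariety; this follows in every case from the identification of the fibre with a Hom-space of two locally free sheaves of fixed rank, so no real obstacle arises. The three computations together yield the proposition for all three transforms uniformly.
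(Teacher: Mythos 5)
Your argument is correct and rests on the same mechanism as the paper's proof: the deformation retraction $(A,\dots)\mapsto(sA,\dots)$, $s\in[0,1]$, which scales the matrix coordinate to zero and fixes $\pi\inv(0)$. The vector-bundle packaging (with $\pi\inv(0)$ as zero section) is a pleasant way to organize the well-definedness check that the paper does by hand, and it is the same observation already used in the paper's corollary on the smoothness of $\Na(\M{t})$, so the two proofs are essentially identical in content.
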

This gives that $\Na(\M{t})\sim \G( n-t+1,n)\times \G(t-1,m)$,
$\Tj(\M{t})\sim \G(n-t+1,n)$ and $\Tj^T(\M{t})\sim \G(t+1,m)$, where
$\sim$ denotes homotopy equivalence.

\begin{proof}
We will only show this for $\Na(\M{t})$ but the other proofs are
similar. Let $\morf{F}{\Na(\M{t})\times\C}{\Na(\M{t})}$ be the map
defined by $F(A,V,W,s)=f_s(A,V,W)=(sA,V,W)$, when we use the
identification for the Nash transformation given by Proposition
\ref{nashcar}. The map is well defined since $(sA)(V)=s(A(V))=0$ and $\im
sA=\im A\subset W$ if $s\neq 0$ and $\im sA=\{0\}\subset W$ if $s=0$. It is
continuous since it is just scalar multiplication. Restrict the map to
$s\in[0,1]$. Then $f_1=\id$, $f_s\vert_{\pi\inv(0)} =id\vert_{\pi\inv(0)}$ and
$f_0(\Na(\M{t}))=\pi\inv(0)$. Hence $f_s$ is a deformation retraction,
and $\Na(\M{t})$ is homotopy equivalent to $\pi\inv(0)$.  
\end{proof}

\section{Transformations of general determinantal singularities}\label{generalsing}

In this section we will introduce the transformations defined above
for general determinantal varieties. We start by introducing the
Tjurina transform. The Tjurina transform of determinantal
singularities has be introduced several places before for example
\cite{tjurina}, \cite{vanstraten},
\cite{arbarellocornalbagriffiths},
\cite{ebelingguseinzade} and \cite{fruhbiskrugerzach}. They in general define
the Tjurina transform of a determinantal singularity $X$ of type
$(m,n,t)$ given by $\morf{F}{\C^n}{M_{m,n}}$ as the fibre product
$X\times_F\Tj(\M{t})$, which works very well in the cases they
consider. But this definition gives the following
problem in a more general setting: Assume that $\dim(X)\leq
(t-1)(n-t+1)$ and let $\morf{p}{X\times_F\Tj(\M{t})}{X}$ be the
projection to the first factor. Then $p\inv(0)\cong \G(n-t+1,n)$,
hence the exceptional fibre of $p$ has dimension greater than or equal
to the dimension of $X$. This means that the fibre product does not
satisfy the conditions to be a transformation given in Definition
\ref{transform}. We will instead give an alternative definition
that does not have this problem. It should be said that in
\cite{ebelingguseinzade} and \cite{fruhbiskrugerzach} they only
consider the Tjurina transformation in situations where this does not
happen, and that our definition agrees with theirs in these cases. We
will in Proposition \ref{tjtilde=tj} see when the two definitions
agree in general. 

\begin{defn}
Let $X$ be a determinantal singularity of type $(m,n,t)$ given by
$\morf{F}{\C^n}{M_{m,n}}$, define $\morf{B}{X_{reg}}{\G(t-1,n)}$ as
the map that sends $x$ into the row space of $F(x)$. Then we define
the Tjurina transform $\Tj(X)$ of $X$ as
\begin{align*}
\Tj(X) &:= \overline{\bigg{\{} \big(x,W\big)\in X_{reg}\times\G(t-1,n)\ \vert\
    W=B(x)\bigg{\}} }\subset X\times \G(t-1,n),
\end{align*}
and we define the map $\morf{\pi^{Tj}}{\Tj(X)}{X}$ as the projection
to the first factor.
\end{defn}
It is clear that this satisfies the assumptions of Definition
\ref{transform} to be a transformation of $(X,X_{sing})$ since
$\pi^{Tj}\vert_{\Tj(X)-\pi\inv_{Tj}(X_{sing})}$ is the inverse of $B$,
it is proper since all fibres are either points or closed subsets of
$\G(t-1,n)$ hence compact, and
$\dim\Big((\pi^{Tj})\inv(X_{sing})\Big)<\dim(X)$ since it is a closed subvariety
of an algebraic variety of $\dim(X)$. 

Notice that the choice of an inner product on $\C^n$ gives a one to
one correspondence between the row space of $F(x)$ and $\ker F(x)$ and
an isomorphism $\G(t-1,n)\cong\G(n-t+1,n)$. Hence we get that
\begin{align*}
\Tj(X) &= \overline{\bigg{\{} \big(x,W\big)\in X_{reg}\times\G(n-t+1,n)\ \vert\
    W=\ker F(x)\bigg{\}} }.
\end{align*}
We use the row space in our definition, since it makes calculation
easier as we see later.

\begin{prop}\label{completeintersection}
Let $X$ be a complete intersection and not a hypersurface of type
$(m,m,m)$, then $\Tj(X)= X$.
\end{prop}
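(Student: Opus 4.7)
The strategy is to first classify which determinantal types $(m,n,t)$ can carry a complete intersection structure, and then to observe that in the surviving case the Grassmannian factor in the definition of $\Tj(X)$ is a single point.

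First I would argue that if $X$ is a complete intersection of determinantal type $(m,n,t)$, then either $t=1$ or $t=m=n$. By the definition of determinantal singularity, $\codim X = (m-t+1)(n-t+1)$, while the defining ideal is generated by the $\binom{m}{t}\binom{n}{t}$ many $t\times t$ minors of $F$. For the variety to actually be a complete intersection these minors must (generically) be a minimal generating set, so the two counts should agree: $\binom{m}{t}\binom{n}{t}=(m-t+1)(n-t+1)$. A short combinatorial check using the elementary inequality $\binom{k}{t}\geq k-t+1$, with equality if and only if $t=1$ or $t=k$, then forces ($t=1$ or $t=m$) and ($t=1$ or $t=n$). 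So either $t=1$, or else $t=m=n$, which is the hypersurface case excluded by hypothesis.

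Thus $X$ is of type $(m,n,1)$, and $\M{1}=\{0\}$, so $X=F\inv(0)$, i.e.\ $F$ vanishes identically on $X$. Now $t-1=0$, so $\G(t-1,n)=\G(0,n)$ consists of the single point represented by the zero subspace. Hence $X\times\G(0,n)$ is canonically identified with $X$, the map $\morf{B}{X_{reg}}{\G(0,n)}$ is forced to be constant, and the locus $\{(x,W)\in X_{reg}\times\G(0,n)\ \vert\ W=B(x)\}$ is simply $X_{reg}$. Taking closure inside $X$ gives $\Tj(X)=X$.

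The only real obstacle is the combinatorial step identifying which $(m,n,t)$ allow a complete intersection. Once that is in hand, the conclusion is essentially formal, since the triviality of $\G(0,n)$ collapses the construction of $\Tj(X)$ onto $X$ itself. I would just double-check that the justification for matching the generator count with the codimension is clean in this setting; one can, for instance, appeal to the Cohen--Macaulay property of generic determinantal ideals to ensure that the $t\times t$ minors form a minimal generating set.
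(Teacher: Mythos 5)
Your second paragraph reproduces the paper's proof exactly: once the type is $(m,n,1)$, the fibre $\G(0,n)$ is a single point, $B$ is constant, $\Tj(X)\subset X\times\G(0,n)=X$, and $\Tj(X)=\overline{X_{reg}}=X$ because $X$ is irreducible so its regular points are dense. Where you diverge is the first step. The paper does not attempt to classify all determinantal types that a complete intersection can carry; it simply asserts that a complete intersection which is not a hypersurface of type $(m,m,m)$ is (to be understood as) a determinantal singularity of type $(m,n,1)$, consistent with the remark in the preliminaries that a complete intersection is given the type of a $(1,m,1)$ singularity. In effect the hypothesis is read as ``$t=1$''.

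Your attempt to actually derive that classification has a genuine gap. You equate the number $\binom{m}{t}\binom{n}{t}$ of $t\times t$ minors with the minimal number of generators of the ideal of $X$, but the minors of $F$ are only \emph{a} generating set (indeed $I(X)$ is the radical of the ideal they generate), and a complete intersection ideal can perfectly well be presented by a redundant family of generators; so $\binom{m}{t}\binom{n}{t}>(m-t+1)(n-t+1)$ does not by itself preclude $X$ from being a complete intersection. The Cohen--Macaulay property you invoke concerns the generic determinantal ideal $I_t\subset\C[M_{m,n}]$, where the minors are indeed a minimal generating set, but minimality is not preserved under pullback along an arbitrary $F$: for degenerate $F$ the pulled-back minors can be far from minimal. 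Consequently your counting argument does not rule out, for example, a complete intersection presented with type $(2,3,2)$. The elementary inequality $\binom{k}{t}\geq k-t+1$ with equality iff $t=1$ or $t=k$ is correct, but the premise feeding into it is not justified. If you drop the classification and, like the paper, take the hypothesis to mean that $X$ is presented with $t=1$, your remaining argument is complete and coincides with the paper's.
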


\begin{proof}
A complete intersection is a determinantal singularity of type
$(m,n,1)$ if it is not a hypersurface of type $(m,m,m)$. Hence
$\Tj(X)\subset X\times\G(0,n) =X$ and $B$ is constant. The result then
follows since $\Tj(X)=\overline{X_{reg}}$ and the regular points
is an open dense subset of $X$. 
\end{proof}
We will later see some examples of hypersurfaces of type $(m,m,m)$
where the Tjurina transform is useful in simplifying singularities.

To study the local properties of the Tjurina transform closer we will
use the following matrix charts on $\G(t-1,n)$. Let $I\subset
\{1,\dots,n\}$ such that $\#I=t-1$. For each such $I=\{
i_1,\dots,i_{t-1}\}$ we define a chart of $\G(t-1,n)$ by the
$(t-1)\times n$ matrix $A_I$ which consists of the columns $C_i$
given as follows: 
\begin{align*}
C_i=
\begin{pmatrix}
a_{1i}\\
\vdots\\
a_{(t-1)i}\\
\end{pmatrix}
\text{ if } i\notin I, \text{ and } C_{i_l}=\begin{pmatrix}
0\\
\vdots\\
0\\
1\\
0\\
\vdots\\
0\\
\end{pmatrix}
\text{the } l^{th} \text{ entry.}
\end{align*}
Here we consider $a_{ji}\in \C$ as variables. Let
$a=(a_{1i_1},\dots,a_{(t-1)i_{n-t+1}})\in \C^{(t-1)(n-t+1)}$, hence
$A_I(a)$ define a map $\morf{\tilde{A}_I}{\C^{(t-1)(n-t+1)}}{\G(t-1,n)}$
by sending $a$ to the row space of
$A_I(a)$. $\big{\{}\tilde{A}_I\big{\}}_I$ is a cover of $\G(t-1,n)$ by algebraic
maps, and if $U_I=\im \tilde{A}_I$ the change of coordinates from
  $\tilde{A}_I\inv (U_I\bigcap U_J)$ to $\tilde{A}_J\inv (U_I\bigcap
  U_J)$ is given by $A_J^TA_I(a)$. 

To see the row space of $F(x)$ in a given chart $A_I$, we construct
the following $(m+t-1)\times n$ matrix:
\begin{align*}
\tilde{F}_I^{Tj}(x,a)=
\left(
\begin{array}{@{} c  @{}}
 A_I(a)\\ \hline
F(x)
\end{array}
\right).
\end{align*}
Then the row space of $F(x)$ is contained in $\tilde{A}_I(a)$ if and
only if $\rank \tilde{F}_I^{Tj}(x,a)=t-1$. 

Let $\widetilde{\Tj}_I(X)=
(\tilde{F}_I^{Tj})\inv\big(M^{t}_{m+t-1,n}\big)\subset
X\times\C^{(t-1)(n-t+1)}$, and
$\morf{\tilde{\pi}^{Tj}_I}{\widetilde{\Tj}_I(X)}{X}$ be the projection
to the first factor. 

It is clear from the above construction that
$\Tj_I(X):=\Tj(X)\bigcap \im \tilde{A}_I\subset \widetilde{\Tj}_I(X)$
but they are not necessarily equal, moreover, $\widetilde{\Tj}_I(X)$ is not
necessarily a determinantal singularity. We have that
$(\pi^{Tj}_I)\inv(X_{reg})=(\tilde{\pi}^{Tj}_I)\inv(X_{reg})$ this implies that
$\dim  \widetilde{\Tj}_I(X)=\max\big(\dim X,\dim
(\tilde{F}_I^{Tj})\inv(M^{t-1}_{m+t-1,n})\big)$. Now $\dim
(\tilde{F}_I^{Tj})\inv(M^{t-1}_{m+t-1,n})$ is the largest of the
dimensions of the pullback of the strata of $M^{t-1}_{m+t-1,n}$. So
$(\tilde{F}_I^{Tj})\inv(M^{s}_{m+t-1,n}-M^{s-1}_{m+t-1,n})\subset
X\times\G(t-1,n)$ consist of the pairs $(x,W)$ such that $x\in
F\inv(M^s_{m,n}-M^{s-1}_{m+t-1,n})$ and $\{\text{row space of }F(x)\}\subset
  W$. Since $\rank F(x)=s-1$ we can write all such $W$ as
  $W=\{\text{row space of   }F(x)\}+W_{F(x)}$ where $W_{F(x)}$ is a $t-1-s$
  dimensional subspace of the complement of $\{\text{row space of
  }F(x)\}\subset\C^n$. Moreover, we have that for any $V$ in the complement of
  $\{\text{row space of }F(x)\}\subset\C^n$ we have that $\rank \{\text{row
    space of   }F(x)\}+V=t-1 $. Hence we get that
  $\big{\{}W\in\G(t-1,n)\ \vert\ \{\text{row space of }F(x)\}\subset
  W\big{\}}$ is isomorphic to $\G(t-1-s,n-s)$. So we get that $\dim
  (\tilde{F}_I^{Tj})\inv(M^{s}_{m+t-1,n}-M^{s-1}_{m+t-1,n})\big) = \dim
  F\inv((M^{s}_{m+t-1,n}-M^{s-1}_{m+t-1,n}) + \dim \G(t-1-s,n-s)$.

The above implies that $\dim  \widetilde{\Tj}_I(X)=\dim \Tj_I(x) =
\dim X$ if and only if $\dim F\inv((M^{s}_{m+t-1,n}-M^{s-1}_{m+t-1,n})
\leq \dim X - \dim \G(t-1-s,n-s) =N -(m-t+1)(n-t+1)-(t-s)(n-t+1) =
N-(m-s+1)(n-t+1)$ for all $s=1,\dots,t$. If $X$ has an isolated
singularity, this becomes $N\geq m(n-t+1)$.

\begin{prop}
If $\dim  \widetilde{\Tj}_I(X)=\dim X$ then
$\widetilde{\Tj}_I(X)$ is a determinantal singularity.
\end{prop}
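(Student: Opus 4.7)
The plan is to verify the defining codimension condition for a determinantal singularity, using $\tilde{F}_I^{Tj}$ of type $(m+t-1,n,t)$ as the candidate defining map of $\widetilde{\Tj}_I(X)$ inside the full ambient space $\C^N\times\C^{(t-1)(n-t+1)}$.

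First I would note that $\morf{\tilde{F}_I^{Tj}}{\C^N\times\C^{(t-1)(n-t+1)}}{M_{m+t-1,n}}$ is holomorphic and that, since the top block $A_I(a)$ contains the identity columns indexed by $I$, the matrix $\tilde{F}_I^{Tj}(x,a)$ always has rank at least $t-1$. Consequently the condition $\tilde{F}_I^{Tj}(x,a)\in M^t_{m+t-1,n}$ is equivalent to the rank being exactly $t-1$, i.e.\ every row of $F(x)$ lying in the row span of $A_I(a)$; this forces $\rank F(x)\leq t-1$, so $x\in X$ automatically. Hence $\widetilde{\Tj}_I(X)$ coincides with $(\tilde{F}_I^{Tj})\inv(M^t_{m+t-1,n})$ regarded as a subvariety of $\C^N\times\C^{(t-1)(n-t+1)}$, not merely of $X\times\C^{(t-1)(n-t+1)}$.

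Next I would perform the codimension count. The codimension of $M^t_{m+t-1,n}$ in $M_{m+t-1,n}$ equals $m(n-t+1)$, so in general $\codim\widetilde{\Tj}_I(X)\leq m(n-t+1)$. Using $\dim X=N-(m-t+1)(n-t+1)$ together with the standing hypothesis $\dim\widetilde{\Tj}_I(X)=\dim X$, a direct calculation gives
\begin{align*}
\codim\widetilde{\Tj}_I(X) &= N+(t-1)(n-t+1)-\dim X \\
&= (t-1)(n-t+1)+(m-t+1)(n-t+1) = m(n-t+1).
\end{align*}
Thus the codimension equals that of $M^t_{m+t-1,n}$, which is precisely the defining condition for $\widetilde{\Tj}_I(X)$ to be a determinantal singularity of type $(m+t-1,n,t)$ with defining map $\tilde{F}_I^{Tj}$.

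There is no substantial obstacle. The statement reduces to a dimension count once one recognises two points: the rank condition on the enlarged matrix $\tilde{F}_I^{Tj}$ absorbs the defining equations of $X$, and the hypothesis forces the generic codimension inequality to be an equality. The only verifications needed beyond this are that $A_I(a)$ has rank identically $t-1$ and that $\tilde{F}_I^{Tj}$ is holomorphic, both of which are immediate from the explicit description of the chart.
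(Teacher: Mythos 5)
Your proof is correct and follows essentially the same route as the paper: both reduce the statement to checking that $\codim \widetilde{\Tj}_I(X) = m(n-t+1) = \codim M^t_{m+t-1,n}$, which is a direct computation from the hypothesis $\dim\widetilde{\Tj}_I(X)=\dim X$ and $\dim X = N-(m-t+1)(n-t+1)$. Your additional observation that the rank condition on $\tilde{F}_I^{Tj}$ already forces $x\in X$, so that the preimage may be taken over the full ambient space $\C^N\times\C^{(t-1)(n-t+1)}$, is a point the paper leaves implicit and is a welcome clarification.
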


\begin{proof}
We just need to check if $\codim \widetilde{\Tj}_I(X)=\codim
M^{t}_{m+t-1,n} = (m+t-1-t+1)(n-t+1)=m(n-t+1)$. But $\codim
\widetilde{\Tj}_I(X) =\codim \Tj_I(X)=\codim X+ (t-1)(n-t+1)=
(m+t-1)(n+t-1)+ (t-1)(n+t-1)=m(n-t+1)$.
\end{proof}

In this case we get that $\widetilde{\Tj}_I(X)$ is a determinantal
singularity of type $(m+t-1,n,t)$. But $\rank \tilde{F}_I^{Tj}(0,0) =
t-1$, so one can find another matrix $F'_I(x,a)$ defining
$\widetilde{\Tj}_I(X)$ such $F'_I(0,0)= 0$ and this is a determinantal
singularity of type $(m+t-1-(t-1),n+(t-1),t-(t-1))=(m,n+t-1,1)$. Since
$\codim \widetilde{\Tj}_I(X) =m(n-t+1)$ we get that
$\widetilde{\Tj}_I(X)$ is a complete intersection. We will later show
how to explicitly find $F'_I(x,a)$ also in the case $\dim
\widetilde{\Tj}_I(X) \neq\dim X$.

We can also use this to determine when $\Tj_I(X)$ and
$\widetilde{\Tj}_I(X)$ are equal. Notice that
$\widetilde{\Tj}_I(X)=\Big(X\times_F\Tj(\M{t})\Big)\bigcap \im
\tilde{A}_I$, hence the next proposition also answers the question,
when is our definition of Tjurina transform the same as the one used
by other authors.
\begin{prop}\label{tjtilde=tj}
$\widetilde{\Tj}_I(X)=\Tj_I(X)$ if and only if $\dim X^s<
N-(m-s+1)(n-t+1)$ for all $s\in 1,\dots,t-1$.
\end{prop}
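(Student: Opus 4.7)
The plan is to combine a dimension count over each stratum of $X$ with the equidimensionality of $\widetilde{\Tj}_I(X)$ that holds in the borderline case. From the definitions one has $\Tj_I(X) \subseteq \widetilde{\Tj}_I(X)$ with equality over $X_{reg}$, so any discrepancy lies in
\[
(\tilde\pi^{Tj}_I)^{-1}(X_{sing}) \;=\; \bigcup_{s=1}^{t-1} (\tilde\pi^{Tj}_I)^{-1}(X^s).
\]
The fiber of $\widetilde{\Tj}_I(X) \to X$ over a point $x \in X^s$ is, whenever non--empty, an open subset of the variety of $(t-1)$--planes in $\C^n$ containing the $(s-1)$--dimensional row space of $F(x)$, an open piece of $\G(t-s, n-s+1)$ of dimension $(t-s)(n-t+1)$. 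Hence $\dim (\tilde\pi^{Tj}_I)^{-1}(X^s) = \dim X^s + (t-s)(n-t+1)$, which matches the dimension estimate the author recorded just before the proposition.

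For ($\Rightarrow$), assume $\widetilde{\Tj}_I(X) = \Tj_I(X)$. Then $(\tilde\pi^{Tj}_I)^{-1}(X^s) = (\pi^{Tj}_I)^{-1}(X^s)$ sits inside the irreducible variety $\Tj_I(X)$ of dimension $\dim X$ as the preimage of the proper closed subset $X^s$ under the surjective morphism $\pi^{Tj}_I$, so by irreducibility its dimension is strictly less than $\dim X$. Combining this with the formula above and $\dim X = N - (m-t+1)(n-t+1)$ rearranges to $\dim X^s < N - (m-s+1)(n-t+1)$, which is the claimed strict bound for each $s = 1, \dots, t-1$.

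For ($\Leftarrow$), assume the strict inequalities hold. Then $\dim (\tilde\pi^{Tj}_I)^{-1}(X_{sing}) < \dim X$, so $\dim \widetilde{\Tj}_I(X) = \dim X$. The codimension calculation preceding the proposition then exhibits $\widetilde{\Tj}_I(X)$ as a determinantal singularity of type $(m+t-1, n, t)$ with the expected codimension $m(n-t+1)$; and the standard reduction at $\tilde{F}_I^{Tj}(0,0)$, whose rank is $t-1$, recasts it as a determinantal singularity with parameter $t=1$, i.e.\ a complete intersection. Complete intersections are Cohen--Macaulay, so $\widetilde{\Tj}_I(X)$ is equidimensional of pure dimension $\dim X$. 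Now $\Tj_I(X)$ is one irreducible component of that dimension; any further component would be forced into $(\tilde\pi^{Tj}_I)^{-1}(X_{sing})$, which we just bounded strictly below $\dim X$. Hence no further components exist and $\widetilde{\Tj}_I(X) = \Tj_I(X)$.

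The main obstacle is the equidimensionality input in ($\Leftarrow$): one must upgrade the numerical equality $\dim \widetilde{\Tj}_I(X) = \dim X$ to the statement that $\widetilde{\Tj}_I(X)$ carries no extra lower--dimensional irreducible components, and this is exactly what Cohen--Macaulayness of the reduced complete intersection presentation provides. Everything else is either a direct computation of Grassmannian fibers or an appeal to the irreducibility of $\Tj_I(X)$.
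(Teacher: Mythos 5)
Your argument is correct and follows essentially the same route as the paper: the fibre-dimension count over each stratum $X^s$ yields the inequalities in the only-if direction, and in the if direction the dimension bound plus the complete-intersection presentation of $\widetilde{\Tj}_I(X)$ gives equidimensionality, which forces any extra component into the preimage of the singular locus and hence rules it out. (Your fibre $\G(t-s,n-s+1)$ of dimension $(t-s)(n-t+1)$ is the correct form of the paper's intermediate expression and agrees with its final formula.)
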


\begin{proof}
Since $\Tj(X)$ is a transformation, we have that $\dim
\pi^{Tj}(X^{t-1})<\dim X$, remember the $X^{t-1}$ is the singular set
of $X$. Then the above calculations of the dimensions of the fibres
give the inequalities, and we get the only if direction.

So assume that the inequalities are satisfied, this implies that $\dim
\Tj_I(X)=\dim \widetilde{\Tj}_I(X)$ and that $\dim
(\widetilde{\pi}^{Tj}_I)\inv(X^{t-1})<\dim X$. Now $\Tj_I(X)$ is an
irreducible component of $\widetilde{\Tj}_I(X)$, and $\Tj_I(X)$ is not
a proper subvariety of any irreducible variety of the same dimension,
since it is closed. This implies that if $\widetilde{\Tj}_I(X)\neq\Tj_I(X)$ then
there exist an other irreducible component $V\subseteq
\widetilde{\Tj}_I(X)$ different from $\Tj_I(X)$. But since
$\widetilde{\Tj}_I(X)$ is a complete intersection it is
equidimensional, and hence $\dim V =\dim \Tj_I(X)$. Since
$(\pi^{Tj}_I)\inv(X_{reg})=(\tilde{\pi}^{Tj}_I)\inv(X_{reg})$ we have
that $V\subset (\tilde{\pi}^{Tj}_I)\inv(X^{t-1})$,  but this is a
contradiction since $\dim V> \dim \tilde{\pi}^{Tj}_I(X^{t-1})$. 
\end{proof}

We now want to give an explicit method to find $F_I'(x,a)$.
Assume that $I=i_1,\dots,i_{t-1}\subset \{1,\dots,n\}$ as before. Now
by adding columns of the form $-a_{ij} C_{i_j}$ to the $i$'th column,
for all $i\notin I$ and all $j= 1,\dots,t-1$, we get a matrix which has $t-1$
linearly independent rows $R_{i_j}$ of the form $R_{i_j}=(0,\dots,0,1,0,\dots,0)$,
where the $1$ is the $i_j$ entry. To this matrix we then add rows of
the form $-f_{ii_j}(x)R_{i_j}$ to the $i$'th row for $i=t,\dots,m$ and
$j=1,\dots,t-1$. We now have a matrix $\bar{F}_I(x,a)$ consisting of
the following columns:
\begin{align*}
\bar{F}_i=
\begin{pmatrix}
0\\
\vdots\\
0\\
f_{1i}(x)-\sum_{j=1}^{t-1}a_{ji}f_{1i_j}(x)\\
\vdots\\
f_{mi}(x)-\sum_{j=1}^{t-1}a_{ji}f_{mi_j}(x)\\
\end{pmatrix}
\text{ if } i\notin I, \text{ and } \bar{F}_{i_l}=\begin{pmatrix}
0\\
\vdots\\
0\\
1\\
0\\
\vdots\\
0\\
\end{pmatrix}
\text{the } l^{th} \text{ entry.}
\end{align*}
The $t\times t$ minors of $\bar{F}_I(x,a)$ still defines
$\widetilde{\Tj}_I(X)$. Notice that we can choose of special minors
$\Delta_{i,j}$, where $i\in\{1,\dots,m\}$ and $j\notin I$, where each
row and each column have a single non zero entry, which is $1$ except
for the $ii_j$ entry which is
$f_{ij}(x)-\sum_{j=1}^{t-1}a_{ji}f_{ii_j}(x)$. This implies that
$\widetilde{\Tj}_I(X)$ is defined by the $(n-t+1)m$ equations 
$f_{ij}(x)-\sum_{j=1}^{t-1}a_{ji}f_{ii_j}(x) =0$. Hence it is defined by the
$1\times 1$ minors of the matrix $m\times(n-t+1)$ matrix $F'_I(x,a)$
with columns:
\begin{align*}
F'_i=
\begin{pmatrix}
f_{1i}(x)-\sum_{j=1}^{t-1}a_{ji}f_{1i_j}(x)\\
\vdots\\
f_{mi}(x)-\sum_{j=1}^{t-1}a_{ji}f_{mi_j}(x)\\
\end{pmatrix}
\text{ if } i\notin I.
\end{align*}

This does still not imply that $\widetilde{\Tj}_I(X)$ is a
determinantal singularity, since the codimension might not be
right. Even if $\widetilde{\Tj}_I(X)$ is a determinantal variety, it is
often not irreducible, and hence $\widetilde{\Tj}_I(X)\neq \Tj_I(X)$,
as we will see in the next examples. 

\begin{ex}\label{ex1}
Let $X$ be the determinantal singularity of type $(2,3,2)$ defined by
the following matrix
\begin{align*}
F_1(x,y,z,w)=
\begin{pmatrix}
w^l & y & x\\
z & w &y^k\\
\end{pmatrix}.
\end{align*}
For $k,l>2$.
In this case $\widetilde{\Tj}_I(X)$ is a determinantal variety for all
$I$. $\widetilde{\Tj}_I(X)\neq \Tj_I(X)$ in the chart defined by
$I=\{2\}$, in the other charts both $\widetilde{\Tj}_I(X)$ and $\Tj_I(X)$
are smooth. Now lets look closer on the equations in the chart defined
by $I=\{2\}$.
\begin{align*}
F'_{\{2\}}(x,y,z,w,a_1,a_3)=
\begin{pmatrix}
w^l -a_1y & x-a_3y\\
z -a_1w &y^k -a_3w\\
\end{pmatrix}.
\end{align*}
Notice that the equations $x-a_3y=0$ and $z-a_1w=0$ just define $x$ and $z$ as
holomorphic functions of the other variables, and give embeddings of
a $\C^4$ into $\C^6$. Now if we multiply the equations $y^k-a_3w=0$
and $w^l-a_1y=0$ we get:
\begin{align*}
0 &= (y^k-a_3w)(w^l-a_1y)=y^kw^l-a_1y^{k+1}-a_3w^{l+1}+a_1a_3yw \\ &=
y^kw^l-a_1a_3yw-a_1a_3yw+a_1a_3yw=yw(y^{k-1}w^{w-1}-a_1a_3). 
\end{align*}
Hence we see that $\widetilde{\Tj}_I(X)$ is not irreducible. $y=0$ and
$w=0$ both define the fibre $(\tilde{\pi}^{Tj}_I)\inv(0)$ which is two
dimensional and therefore can not be a subset of $\Tj_I(X)$. Therefore,
$\Tj_I(X)$ is given by the equations $y^{k-1}w^{w-1}-a_1a_3=0$, $w^l
-a_1y =0$ and $y^k -a_3w=0$. Hence it is a determinantal singularity
of the same type as $X$ given by the matrix.
\begin{align*}
\begin{pmatrix}
w^{l-1} & y & a_3\\
a_1 & w &y^{k-1}\\
\end{pmatrix}.
\end{align*}
\end{ex}

\begin{ex}\label{ex2}
Let $X\subset \C^4$ be a determinantal singularity of type $(3,2,2)$ given by
\begin{align*}
F_2(x,y,z,w)=
\begin{pmatrix}
w^l & z\\
y & w \\
x &y^k\\
\end{pmatrix}.
\end{align*}
For $k,l>2$. The $\widetilde{\Tj}_I(X)$ is given in the two charts
$I=\{1\},\{2\}$ by the matrices
\begin{align*}
F'_{\{ 1\}}(x,y,z,w,a_1)=
\begin{pmatrix}
z -a_1w^l\\
w -a_1y\\
y^k-a_1x\\
\end{pmatrix}
\text{ and }
F'_{\{ 2\}}(x,y,z,w,a_2)=
\begin{pmatrix}
w^l -a_2z\\
y -a_2w\\
x -a_2y^k\\
\end{pmatrix}
\end{align*}
In this case we see that $\widetilde{\Tj}_I(X)= \Tj_I(X)$, and hence
the Tjurina transform of $X$ is a complete intersection.
\end{ex}

Notice that the singularities in Example \ref{ex1} and \ref{ex2} are
the same, it is just their representations as determinantal
singularities that are different. In fact the difference is that
$F_1(x,y,z,w)=F_2(x,y,z,w)^T$. 

Let us define $\Tj^T(X)$.
\begin{defn}
Let $X$ be a determinantal singularity of type $(m,n,t)$ given by
$\morf{F}{\C^N}{M_{m,n}}$, define $\morf{C}{X_{reg}}{\G(t-1,m)}$ as
the map that sends $x$ into the column space of $F(x)$. Then we define
$\Tj^T(X)$ of $X$ as
\begin{align*}
\Tj^T(X) &= \overline{\bigg{\{} \big(x,W\big)\in X_{reg}\times\G(t-1,n)\ \vert\
    W=C(x)\bigg{\}} }\subset X\times \G(t-1,m),
\end{align*}
and we define the map $\morf{\pi^{Tj^T}}{\Tj^T(X)}{X}$ as the projection
to the first factor.
\end{defn}
This definition gives us that $\Tj^T(X)=\Tj(X^T)$, where $X^T$ is $X$
but defined as a
determinantal singularity by $\morf{F(x)^T}{\C^N}{M_{n,m}}$. This
means that we can define $\widetilde{\Tj}_I^T(X)$ as for $\Tj(X)$,
either by setting  $\widetilde{\Tj}_I^T(X)= \widetilde{\Tj}_I(X^T)$ or
by defining it using $\bar{F}_I^T(x,a):=\left(
\begin{array}{@{} c | c @{}}
F(x) & A_I^T
\end{array}
\right)$,
 where $I$ now is a subset of $1,\dots,m$.

This of course immediately gives us the following results.
\begin{prop} $\widetilde{\Tj}_I^T(X)$ is a determinantal singularity if
  and only if $\dim X^s\leq N-(m-t+1)(n-s+1)$ for all $s\in 1,\dots,t$.
\end{prop}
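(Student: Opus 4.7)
The plan is to deduce this directly from the analogous statement for $\widetilde{\Tj}_I(X)$ by passing to the transpose. Recall that for a determinantal singularity $X$ of type $(m,n,t)$ given by $F\colon\C^N\to M_{m,n}$, the calculation preceding the previous proposition shows
\[
\dim\widetilde{\Tj}_I(X)=\max\bigl(\dim X,\max_s \dim F\inv(\M{s}-\M{s-1})+\dim\G(t-1-s,n-s)\bigr),
\]
and that $\widetilde{\Tj}_I(X)$ has the expected codimension $m(n-t+1)$ (equivalently, is a determinantal singularity of type $(m,n+t-1,1)$ after the rank-reduction discussed above) if and only if $\dim\widetilde{\Tj}_I(X)=\dim X$. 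Unwinding this equality stratum by stratum gave the bound $\dim X^s\le N-(m-s+1)(n-t+1)$ for all $s=1,\dots,t$, in both directions.

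Next I would invoke the identification $\widetilde{\Tj}_I^T(X)=\widetilde{\Tj}_I(X^T)$, where $X^T$ denotes the same underlying germ equipped with the determinantal structure of type $(n,m,t)$ coming from $F^T\colon\C^N\to M_{n,m}$. Here the key observation is that transposition preserves rank, so the stratifications coincide: $(X^T)^s=X^s$ for every $s$, because both equal $\{x\in X\mid\rank F(x)=s\}$.

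Finally I would apply the previous proposition to $X^T$. Swapping the roles of $m$ and $n$ in the inequality above, the condition that $\widetilde{\Tj}_I(X^T)$ be a determinantal singularity becomes
\[
\dim (X^T)^s\le N-(n-s+1)(m-t+1)\qquad\text{for all }s=1,\dots,t,
\]
and, using $(X^T)^s=X^s$ and commutativity of multiplication in the right-hand side, this is exactly $\dim X^s\le N-(m-t+1)(n-s+1)$.

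There is no real obstacle: the content of the proposition is a straightforward transposition of the earlier result, and the only thing to verify carefully is that the previous criterion is genuinely an \emph{if and only if}, which is visible from the dimension formula (either the maximum is achieved by $\dim X$, in which case codimension and type work out correctly, or it is strictly larger, in which case $\widetilde{\Tj}_I(X)$ fails to have the codimension required to be a determinantal singularity of the relevant type).
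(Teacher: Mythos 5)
Your argument is correct and is exactly the route the paper takes: the paper states this proposition with no written proof, asserting that it follows ``of course immediately'' from the identification $\widetilde{\Tj}_I^T(X)=\widetilde{\Tj}_I(X^T)$ together with the dimension count for $\widetilde{\Tj}_I$, which is precisely what you carry out (including the correct observations that $(X^T)^s=X^s$ since transposition preserves rank, and that the earlier criterion is an equivalence because a dimension strictly larger than $\dim X$ forces the codimension below $m(n-t+1)$). Nothing further is needed.
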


\begin{prop}
$\widetilde{\Tj}_I^T(X)=\Tj^T_I(X)$ if and only if $\dim X^s<
N-(m-t+1)(n-s+1)$ for all $s\in 1,\dots,t-1$.
\end{prop}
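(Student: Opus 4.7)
The plan is to derive this proposition from Proposition \ref{tjtilde=tj} by passing to the transpose presentation of $X$. Recall that in the discussion immediately preceding the definition of $\Tj^T(X)$ we observed $\Tj^T(X)=\Tj(X^T)$, where $X^T$ is the same underlying space but equipped with the determinantal structure coming from $F^T\colon\C^N\to M_{n,m}$. In particular $X^T$ is a determinantal singularity of type $(n,m,t)$. By construction we also have $\widetilde{\Tj}_I^T(X)=\widetilde{\Tj}_I(X^T)$ for $I\subset\{1,\dots,m\}$, where the roles of $m$ and $n$ are swapped on the right-hand side.

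First I would note that the stratification is unchanged under transposition: since $\rank F(x)=\rank F(x)^T$, the strata $(X^T)^s$ coincide with $X^s$ as subsets of $X$, and in particular their dimensions agree.

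Next I would apply Proposition \ref{tjtilde=tj} directly to $X^T$, whose determinantal type is $(n,m,t)$. The equivalence given there reads, after swapping $m\leftrightarrow n$ in the formula, that $\widetilde{\Tj}_I(X^T)=\Tj_I(X^T)$ if and only if
\begin{equation*}
\dim (X^T)^s < N-(n-s+1)(m-t+1) \quad \text{for all } s\in 1,\dots,t-1.
\end{equation*}
Using $(n-s+1)(m-t+1)=(m-t+1)(n-s+1)$ and $\dim (X^T)^s=\dim X^s$, this is exactly the inequality in the statement, and combined with the identifications $\widetilde{\Tj}_I(X^T)=\widetilde{\Tj}_I^T(X)$ and $\Tj_I(X^T)=\Tj_I^T(X)$ it yields the claim.

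There is really no obstacle here beyond verifying that the transpose identifications are compatible with the matrix charts; the only potentially confusing point is keeping track of which index set $I$ lives where, namely that here $I\subset\{1,\dots,m\}$ on the $\Tj^T$ side corresponds to the analogous subset for the $(n,m,t)$-presentation of $X^T$. Once this bookkeeping is done, the proposition is an immediate corollary of Proposition \ref{tjtilde=tj}.
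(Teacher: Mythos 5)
Your proposal is correct and is precisely the argument the paper intends: the paper derives this proposition as an immediate consequence of the identification $\Tj^T(X)=\Tj(X^T)$ (with $X^T$ of type $(n,m,t)$) together with Proposition \ref{tjtilde=tj}, which is exactly the reduction you carry out. You simply make explicit the $m\leftrightarrow n$ swap and the invariance of the strata under transposition, which the paper leaves unstated.
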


Notice that this definition of $\Tj^T(\M{t})$ is the same as the one
we gave earlier, since the column space of a matrix is the same as its
image.

The next example shows that just like the blow-up and the Nash
transform, the Tjurina transform of a normal singularity need not be
normal and that the dimension of the singular set can increase under the
Tjurina transform.
\begin{ex}[$\Tj(X)$ need not be normal]
Let $X$ be the hypersurface singularity given by
$z^2-x^4-x^2y^3-x^2y^5-y^8=0$ it can be given as a determinantal
singularity of type $(2,2,2)$ by the matrix $\left(\begin{smallmatrix}
 z & x^2+y^3\\
 x^2+y^5 & z\\
\end{smallmatrix}\right)$. We get that the Tjurina transform is
\begin{align*}
F'_{\{ 1\}}(x,y,z,a_2)=
\begin{pmatrix}
x^2+y^3 -a_2z\\
z -a_2(x^2+y^5)\\
\end{pmatrix}
\text{ and }
F'_{\{ 2\}}(x,y,z,a_1)=
\begin{pmatrix}
z -a_1(x^2+y^3)\\
x^2+y^5 -a_1z\\
\end{pmatrix}.
\end{align*}  
In the first chart we can by change of coordinates, see that we have
the hypersurface singularity $x^2+y^3-a_2^2(x^2+y^5)=0$, which has all
of the $a_2$-axis as its singular set. In the same way the second
chart gives us the hypersurface $x^2+y^5-a_1^2(x^2+y^3)=0$, which has
the $a_1$-axis as its singular set. Hence $\Tj(x)$ have singularities
of codimension 1, and is, therefore, not normal. It also illustrates
that the singular set of $\Tj(X)$ might have larger dimension than the
singular set of $X$.
\end{ex}

We saw in Section \ref{modelsing} that for the model determinantal
singularities $\Na(\M{t}) \cong
\Tj(\M{t})\times_{(\M{t}-\M{t-1})}\Tj^T(\M{t})$. Is this then true in general?
Is $\Na(X) \cong \Tj(X)\times_{X_{reg}}\Tj^T(X)$? The answer is
unfortunately no as we can see in the following. Let $X$ be the
determinantal singularity defined in Example \ref{ex1}. There we saw
that the exceptional divisor of $\Tj(X)$ consist of two irreducible
components. In Example \ref{ex2} we got that the exceptional divisor
of $\Tj^T(X)$ is a single irreducible curve. Hence the exceptional divisor of
$\Tj(X)\times_{X_{reg}}\Tj^T(X)$ consists of three irreducible
curves. But in \cite{tjurina} Tjurina shows that $X$ is a minimal
surface singularity with the following dual resolution graph.
$$
\xymatrix@R=6pt@C=24pt@M=0pt@W=0pt@H=0pt{
\overtag{\Circ}{-2}{8pt}\dashto[rr] &
{\hbox to 0pt{\hss$\underbrace{\hbox to 80pt{}}$\hss}}&
\overtag{\Circ}{-2}{8pt}\lineto[r] &
\overtag{\Circ}{-3}{8pt}\lineto[r] &
\overtag{\Circ}{-2}{8pt}\dashto[rr] &
{\hbox to 0pt{\hss$\underbrace{\hbox to 80pt{}}$\hss}}&
\overtag{\Circ}{-2}{8pt}\\
&{k-1}&&&&{l-1}.}$$
Following the work of Spivakovsky \cite{spivakovsky} the irreducible
components of the exceptional divisor of the normalized Nash transform of a
surface singularity corresponds to the irreducible components of the
exceptional divisor intersecting the strict transform of the polar curve of
a generic plane projection. By Theorem 5.4 in Chapter III of
\cite{spivakovsky} we find that the polar of a generic plane
projection of $X$ intersects the exceptional divisor in two different
components. This implies that the exceptional
divisor of $\Na(X)$ has at most two components, since the number of
components can not decrease under normalization. Hence $\Na(X)$ and
$\Tj(X)\times_{X_{reg}}\Tj^T(X)$ have non isomorphic exceptional divisors,
and can, therefore, not be isomorphic as transformations.

\section{When is the Tjurina transform a Complete Intersection}\label{completeintersection}

In  Lemma 3.14 of their articel \cite{fruhbiskrugerzach}
Fr\"uhbis-Kr\"uger and Zach find conditions under which the Tjurina
transform of Cohen-Macauley codimension 2 singularities in $\C^5$ only
has isolated singularities. They also notice in Remark 3.16 that in all the
cases of simple isolated Cohen-Macauley codimension 2 singularities
they consider, the Tjurina transforms are isolated local complete
intersection. In this section we  consider the second question and
give some general condition on when the Tjurina transform of an
EIDS is a local complete intersection.

If $X$ is an EIDS, remember that means that $F$ is transverse to all
strata of $\M{t}$ in a punctured neighbourhood of the origin, then we
can get the following result concerning the Tjurina transform.

\begin{prop}\label{dependingonN}
Let $X\subset\C^N$ be an EIDS of type $(m,n,t)$, then $\Tj(X)$ is a
local complete intersection if $N-m(n-t+1)>\dim X^1$ and $\Tj^T(X)$ is
a local complete intersection if $N-n(m-t+1)>\dim X^1$.
\end{prop}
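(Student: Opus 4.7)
My plan is to verify, in every chart $A_I$ of $\G(t-1,n)$, the two dimension conditions already extracted in the section: the one preceding Proposition~\ref{tjtilde=tj} (which gives $\widetilde{\Tj}_I(X)$ the structure of a determinantal singularity with $\dim\widetilde{\Tj}_I(X)=\dim X$), and the strict version in Proposition~\ref{tjtilde=tj} (which gives $\widetilde{\Tj}_I(X)=\Tj_I(X)$). Once both hold, the discussion immediately after Proposition~\ref{tjtilde=tj} rewrites $\widetilde{\Tj}_I(X)$ as a determinantal singularity of type $(m,n+t-1,1)$ with codimension $m(n-t+1)$, that is, a complete intersection, and since the charts $A_I$ cover $\G(t-1,n)$, $\Tj(X)$ is locally a complete intersection.

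The substantive step is bounding $\dim X^s$ for $s=1,\dots,t-1$. Since $X$ is an EIDS, $F$ is transverse to every stratum $\M{s}-\M{s-1}$ (of codimension $(m-s+1)(n-s+1)$ in $M_{m,n}$) at each point of $X-\{0\}$, so
\[
\dim X^s \leq N-(m-s+1)(n-s+1),
\]
with $X^s$ reducing to at most the origin when the right-hand side is negative. For $s\geq 2$, the strict inequality $n-s+1> n-t+1$ yields
\[
\dim X^s \leq N-(m-s+1)(n-s+1) < N-(m-s+1)(n-t+1),
\]
which is exactly the bound needed for Proposition~\ref{tjtilde=tj}. In the degenerate case where $X^s$ collapses to $\{0\}$, the hypothesis $N-m(n-t+1)>\dim X^1\geq 0$ forces $N>m(n-t+1)\geq(m-s+1)(n-t+1)$, and the inequality again holds. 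For $s=1$ the required inequality is precisely the hypothesis $N-m(n-t+1)>\dim X^1$.

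For $\Tj^T(X)$ I would apply the same argument to the transposed presentation $X^T$, which is an EIDS of type $(n,m,t)$ with $(X^T)^1=X^1$, using the identification $\Tj^T(X)=\Tj(X^T)$; the hypothesis for $X^T$ is $N-n(m-t+1)>\dim X^1$, giving the second claim.

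The only real obstacle is the dimension accounting in the first step, and the role of the hypothesis is to supply exactly the $s=1$ case (where the EIDS transversality alone does not suffice because $\M{1}=\{0\}$ is only a point); the inequalities for $s\geq 2$ come for free from the EIDS condition, and the rest of the argument is an assembly of results already proved in the section.
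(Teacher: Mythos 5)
Your proposal is correct and follows essentially the same route as the paper: both reduce the claim to showing $\Tj_I(X)=\widetilde{\Tj}_I(X)$ in each chart via Proposition~\ref{tjtilde=tj}, note that the EIDS transversality gives $\dim X^s \leq N-(m-s+1)(n-s+1) < N-(m-s+1)(n-t+1)$ for $s\geq 2$ so that only the $s=1$ inequality needs to be supplied by the hypothesis, and then invoke the earlier computation that $\widetilde{\Tj}_I(X)$ is a complete intersection of type $(m,n+t-1,1)$; the transpose case is handled identically by exchanging $m$ and $n$. Your treatment is if anything slightly more careful than the paper's about empty or degenerate strata.
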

\begin{proof}
To show that $\Tj(X)$ is a local complete
intersection, it is enough to show that $\Tj_I(X)$ is a complete
intersection for all $I$. To do this we show that
$\Tj_I(X)=\widetilde{\Tj}_I(X)$. First notice that being an EIDS implies that
$\dim X^s =N-(m-s+1)(n-s+1)<N-(m-s+1)(n-t+1)$ for all $s\in 2,\dots
t-1$. So for $X$ to satisfy Proposition \ref{tjtilde=tj} we just need
that $\dim X^1< N-m(n-t+1)$ which follows from the assumption. So
$\Tj_I(X)=\widetilde{\Tj}_I(X)$  and $\widetilde{\Tj}_I(X)$ is a
complete intersection. Hence $\Tj(X)$ is a local complete
intersection.

The proof for $\Tj^T(X)$ is similar, just exchange $n$ and $m$.
\end{proof}

The assumption on $N$ can replaced by assumption on $t$ and the strata
of $X$ as seen in the next theorem.

\begin{thm}\label{tgeaterthantree}
Let $X$ be an EIDS of type $(m,n,t)$, where $t\geq 3$ and $X^2\neq
\emptyset$. Then at least one of $\Tj(X)$ and $\Tj^T(X)$ is a local
complete intersection. 
\end{thm}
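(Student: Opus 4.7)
The plan is to invoke Proposition \ref{dependingonN}, which reduces the theorem to a numerical check: it suffices to show that at least one of the inequalities $N-m(n-t+1)>\dim X^1$ and $N-n(m-t+1)>\dim X^1$ holds under our hypotheses.

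My first step would be to estimate $\dim X^1$ for an EIDS. After the standard conjugation I may assume $F(0)=0$, so $0\in X^1$. Since $F$ is transverse to every stratum of $\M{t}$ away from the origin, the set $X^1\setminus\{0\}=F\inv(0)\setminus\{0\}$ is either empty (which must happen when $N<mn$, for dimensional reasons) or smooth of pure dimension $N-mn$ (when $N\geq mn$); in either situation $\dim X^1\leq\max(0,N-mn)$. This already disposes of the case $N\geq mn$: the two sufficient conditions reduce to $m(t-1)>0$ and $n(t-1)>0$, both automatic for $t\geq 2$, so both $\Tj(X)$ and $\Tj^T(X)$ are local complete intersections.

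The main case is $N<mn$, where $\dim X^1=0$ and I need $N>m(n-t+1)$ or $N>n(m-t+1)$. Here the hypothesis $X^2\neq\emptyset$ enters: since $0\in X^1$, the set $X^2$ must contain a non-origin point, and EIDS transversality to the rank-one stratum, which has codimension $(m-1)(n-1)$ in $M_{m,n}$, forces $N\geq(m-1)(n-1)$. Using the symmetry $\Tj^T(X)=\Tj(X^T)$ I may assume without loss of generality that $m\geq n$. Then $t\geq 3$ combined with $m\geq n$ gives $m(t-2)\geq m\geq n$, a short rearrangement of which yields $(m-1)(n-1)>m(n-t+1)$, hence $N>m(n-t+1)$, and Proposition \ref{dependingonN} gives that $\Tj(X)$ is a local complete intersection.

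The whole argument is essentially dimension bookkeeping, so nothing is genuinely hard; the only real subtlety is the behavior of $X^1$ at the origin, where EIDS transversality can fail and $\dim X^1$ need not equal the ``expected'' value $N-mn$. This is precisely why Proposition \ref{dependingonN} alone is not enough, and why both hypotheses are used together: $X^2\neq\emptyset$ to lower-bound $N$ via transversality at the rank-one stratum, and $t\geq 3$ to convert that lower bound into the strict numerical inequality demanded by at least one of the two transforms.
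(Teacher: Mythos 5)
Your proposal is correct and follows essentially the same route as the paper: both reduce to Proposition \ref{dependingonN} and verify the required numerical inequality by combining the lower bound $N\geq(m-1)(n-1)$ coming from $X^2\neq\emptyset$ (the paper phrases this as $\dim X^1<\dim X^2=N-(m-1)(n-1)$) with the arithmetic fact that $t\geq 3$ forces $m(t-2)>n-1$ or $n(t-2)>m-1$, hence $(m-1)(n-1)>m(n-t+1)$ or $(m-1)(n-1)>n(m-t+1)$. Your version merely makes the dimension bookkeeping for $X^1$ and the transversality input more explicit than the paper does.
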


\begin{proof}
First notice that since $t\geq 3$ one of the following two inequalities holds
$n-1<m(t-2)$ or $m-1<n(t-2)$. We will first show that if the first
equation holds, then $\Tj(X)$ is a local complete intersection.

Assume that $n-1<m(t-2)$. To show that that $\Tj(X)$ is a complete
intersection, we just need to show that $\dim X^1<N-m(n-t+1)$ by Proposition
\ref{dependingonN}. Now $\dim X^1<\dim X^2= N
-(m-1)(n-1) = N-mn+m+n-1< N-mn+m+m(t-2)= N-m(n-t+1)$. So $\Tj(X)$
is a local complete intersection.

If $m-1<n(t-2)$ then the same argument with exchanging $m$ and $n$,
shows that $\Tj^T(X)$ is a local complete intersection.
\end{proof}

As we saw in Example \ref{ex1} and Example \ref{ex2} the theorem can still
hold if $t<3$, but next we will give an example with $t=2$ where we
have that $\Tj_I(X)\neq \widetilde{\Tj}_I(X)$ and $\Tj_J^T(X)\neq
\widetilde{\Tj}^T_J(X)$ for all $I,J$. But in the example both
$\Tj(X)$ and $\Tj^T(X)$ are complete intersections.

\begin{ex}\label{ex3}
Let $X\subset \C^3$ be a determinantal singularity of type $(3,2,2)$ given by
\begin{align*}
F_3(x,y,z,w)=
\begin{pmatrix}
z & y & x^{k-3}\\
0 & x & y \\
\end{pmatrix}.
\end{align*}
For $k>4$. The $\widetilde{\Tj}_I(X)$ is given in the three charts
$I=\{1\},\{2\},\{3\}$. In the first chart the matrix is
\begin{align*}
F'_{\{ 1\}}(x,y,z,a_2,a_3)=
\begin{pmatrix}
y -a_2z & x^{k-3}-a_3z\\
x & y-a_3x\\
\end{pmatrix}.
\end{align*}
We see that $\widetilde{\Tj}_{\{ 1\}}(X)$ is the fibre over $0$ (given
by $x=y=z=0$) union
the $z$-axis (given by $x=y=a_2=a_3=0$), so we get that $\Tj_{\{
  1\}}(X)$ is the $z$-axis. 

In the second chart we get
\begin{align*}
F'_{\{ 2\}}(x,y,z,w,a_1,a_3)=
\begin{pmatrix}
z -a_2y & x^{k-3}-a_3y\\
-a_1x & y -a_3x\\
\end{pmatrix}.
\end{align*}
Here we see that $\widetilde{\Tj}_{\{ 2\}}(X)$ is the fibre $0$ (given
by $x=y=z=0$) union the curve singularity given by $x^{k-4}-a_3^2=0$,
$y=a_3x$ and $a_1=z=0$. Hence $\Tj_{\{ 2\}}(X)$ is a $A_{k-5}$ plane
curve singularity embedded in $\C^5$.

In the last chart we
\begin{align*}
F'_{\{ 3\}}(x,y,z,w,a_1,a_2)=
\begin{pmatrix}
z -a_1x^{k-3} & y-a_2x^{k-3}\\
-a_1y & x -a_2y\\
\end{pmatrix}.
\end{align*}
Now we see that $\widetilde{\Tj}_{\{ 2\}}(X)$ is the fibre $0$ (given
by $x=y=z=0$) union the curve given by $1-a_2^2x^{k-4}=0$,
$y=a_2x^{k-3}$ and $a_1=z=0$. Hence $\Tj_{\{ 2\}}(X)$ is a smooth
curve in this chart.

So $\Tj(X)$ is a line disjoint union a $A_{k-5}$ curve, and the fibre over $0$
is $2$ dimensional.

If we calculate $\widetilde{\Tj}^T_I(X)$ in the charts $\{ 1\}$ and $\{
2\}$. We get
\begin{align*}
F'_{\{ 1\}}(x,y,z,a_2)=
\begin{pmatrix}
-a_2z\\
x -a_2y\\
y-a_2x^{k-3}\\
\end{pmatrix}
\text{ and }
F'_{\{ 2\}}(x,y,z,w,a_1)=
\begin{pmatrix}
z\\
y -a_1x\\
x^{k-3} -a_1y\\
\end{pmatrix}.
\end{align*}
We see that in the first chart we have a line union the fibre over
$0$ and in the second chart we have an $A_{k-5}$ curve singularity
union the fibre over zero.

So in this case we have that $\Tj(X)$ and $\Tj^T(X)$ are the same, a
line disjoint union an $A_{k-5}$. Notice that in this case $\Tj(X)$ is
also local complete intersection. If we consider that $X$ is a line through
the singular point of a $A_{k-4}$, we see that the transformation
separates the line from the singularity, but makes the singularity
somewhat worse.
\end{ex}

In Theorem \ref{tgeaterthantree} we saw that if $t\geq 3$ then one of
$\Tj(X)$ or $\Tj^T(X)$ is a local complete intersection, the case
$t=1$ is not interesting, because in this case $\Tj(X)=\Tj^T(X)=X$ and
$X$ is a complete intersection. The next proposition will explain the
case for $t=2$.

\begin{prop}
Let $X$ be an EIDS of type $(m,n,2)$, then one of $\Tj(X)$ or
$\Tj^T(X)$ is a local complete intersection if $\min(n,m)\leq \dim X -
\dim X^1$.
\end{prop}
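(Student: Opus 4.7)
The plan is to apply Proposition \ref{dependingonN} directly. For $t=2$ the two sufficient conditions stated there become $\dim X^1 < N - m(n-1)$ for $\Tj(X)$ to be a local complete intersection and $\dim X^1 < N - n(m-1)$ for $\Tj^T(X)$ to be a local complete intersection. Recall also that for an EIDS of type $(m,n,2)$ we have $\dim X = N - (m-1)(n-1)$.

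By the symmetry $\Tj^T(X) = \Tj(X^T)$ (obtained by transposing $F$ and swapping the roles of $m$ and $n$), I may assume without loss of generality that $m \leq n$, so $\min(n,m) = m$. The hypothesis $m \leq \dim X - \dim X^1$ then rearranges to
\begin{align*}
\dim X^1 \leq \dim X - m = N - (m-1)(n-1) - m.
\end{align*}
A direct expansion gives $(m-1)(n-1) + m = mn - n + 1 = n(m-1) + 1$, so the right-hand side equals $N - n(m-1) - 1$, which is strictly less than $N - n(m-1)$. Hence the sufficient condition of Proposition \ref{dependingonN} for $\Tj^T(X)$ is met, and $\Tj^T(X)$ is a local complete intersection. (In the opposite case $n \leq m$ the symmetric argument yields that $\Tj(X)$ is a local complete intersection.)

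There is no real obstacle here: the whole content of the statement is the recognition that for $t=2$ one has $\dim X = N-(m-1)(n-1)$, so the quantity $\dim X - \dim X^1$ is exactly what controls the bounds in Proposition \ref{dependingonN}, and the smaller of $m$ and $n$ is precisely the slack needed to push one of those bounds past the threshold. The only care required is to pick the correct one of $\Tj$ or $\Tj^T$ to conclude about, which is dictated by which of $m,n$ is the minimum.
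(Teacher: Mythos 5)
Your proposal is correct and follows essentially the same route as the paper: apply Proposition \ref{dependingonN} with $t=2$, use $N=\dim X+(m-1)(n-1)$ to rewrite the threshold, and observe that the hypothesis on $\min(m,n)$ forces the bound for $\Tj^T(X)$ (when $m\leq n$) or for $\Tj(X)$ (when $n\leq m$). The algebra checks out, including the identification $(m-1)(n-1)+m=n(m-1)+1$ and the correct pairing of $\min(m,n)=m$ with $\Tj^T$ rather than $\Tj$.
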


\begin{proof}
To prove that $X$ is a complete intersection we just need to see that
$\dim X^1<N-m(n-t+1)=N-m(n-1)$ by Proposition \ref{dependingonN}. But
$(m-t+1)(n-t+1)=(m-1)(n-1)=\codim X$, hence
the inequality becomes $\dim X^1< (m-1)(n-1)+dim X -m(n-1)$. Hence
$\Tj(X)=\widetilde{\Tj}(X)$ and hence a complete intersection if
$n-1<\dim X-\dim X^1$. The case $\Tj^T(X)= \widetilde{\Tj^T}(X)$ is
gotten by exchanging $n$ and $m$ and the result follows.
\end{proof}

\begin{cor}
Let $X$ be an EIDS of type $(m,n,2)$ with an isolated singularity,
then one of $\Tj(X)$ or $\Tj^T(X)$ is a local complete intersection if
$\min(n,m)\leq \dim X$. 
\end{cor}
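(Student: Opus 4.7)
The plan is to derive this immediately from the preceding proposition by observing that the isolated singularity hypothesis forces $\dim X^1 = 0$.

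First I would recall that for an EIDS of type $(m,n,2)$ the singular set is precisely $X^1 = F\inv(\M{1}) = F\inv(0)$, since $\M{1} = \{0\}$. Saying that $X$ has an isolated singularity therefore means that $X^1 = \{0\}$ (or is empty, in which case $X$ is smooth and both transforms coincide with $X$, which is trivially a complete intersection). In either case $\dim X^1 = 0$.

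Then the condition $\min(n,m) \leq \dim X - \dim X^1$ of the preceding proposition reduces to $\min(n,m) \leq \dim X$, which is exactly our hypothesis. Applying the proposition yields that one of $\Tj(X)$ or $\Tj^T(X)$ is a local complete intersection. There is no real obstacle here; the corollary is essentially a specialization of the proposition to the isolated-singularity setting, and the only thing to check is the identification of $X^1$ for $t=2$.
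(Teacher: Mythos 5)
Your proposal is correct and matches the paper's (implicit) argument: the corollary is stated without proof precisely because, for $t=2$, the singular set is $X^1=F\inv(\M{1})=F\inv(0)$, so the isolated-singularity hypothesis gives $\dim X^1=0$ and the preceding proposition applies verbatim. Your handling of the edge case $X^1=\emptyset$ is a harmless extra check.
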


These result is only in the one direction, because what we really
prove is that if the inequalities are satisfied, then
$\Tj(X)=\widetilde{\Tj}(X)$ or $\Tj^T(X)= \widetilde{\Tj^T}(X)$. But
$\Tj(X)$ or $\Tj^T(X)$ could still be local complete intersections,
even if this is not true. 

\section{Using Tjurina Transform to Resolve Hypersurface
  Singularities}\label{hypersurface}

In the previous section we saw that very often the Tjurina transform is a
complete intersection, which means that one can not get a resolution
by using only the Tjurina transform. Notice also that in
several of the examples $\Tj(X)$ is normal, so using only Tjurina
transform and normalizations will also not produce a resolution. In the
next example we will look at the case of the $A_n$ surface
singularities and see that it might not be completely impossible to use
the Tjurina transform to achieve a resolution. 

\begin{ex}[$A_n$ singularities]\label{ansing}
We will in this example see how different representations of the simple
$A_n$ singularity can lead to different Tjurina transforms. 

First we can of course represent $A_n$ as a determinantal singularity
of type $(1,1,1)$, then the Tjurina transform of $A_n$ is just $A_n$
itself, by Proposition \ref{completeintersection}. But we can also
represent $A_n$ as a determinantal singularity of type $(2,2,2)$ like
the following:
\begin{align*}
F(x,y,z)=
\begin{pmatrix}
 x & z^l\\
 z^{n-l+1} & y \\
\end{pmatrix},
\end{align*}
where $0<l\leq n$. In this case we get that the Tjurina transform is given by:
\begin{align*}
F'_{\{ 1\}}(x,y,z,a_2)=
\begin{pmatrix}
z^l -a_2x\\
y -a_2z^{n-l+1}\\
\end{pmatrix}
\text{ and }
F'_{\{ 2\}}(x,y,z,a_1)=
\begin{pmatrix}
x -a_1z^l\\
z^{n-l+1} -a_1y\\
\end{pmatrix}.
\end{align*}
So we see that $\Tj(A_n)$ using this representations have an $A_{l-1}$
and an $A_{n-l}$ singularity, so we have simplified the
singularity. It is clear, that by writing these new $A_m$ singularities
as determinantal singularities of type $(2,2,2)$, we can apply the
Tjurina transform again to simplify the singularity. By repeatedly
doing this we can resolve the $A_n$ singularity. 
\end{ex}

As we can see in Example \ref{ansing} the Tjurina transform depends
not only on the singularity type of $X$ but we also get different
transforms if we have different matrix presentations of the same type.

We will in the next example show how to obtain a resolution trough
repeated Tjurina transform changing the determinantal type and
matrix presentation. By this we mean that the Tjurina transform gives
us a complete intersection of the form $(m,n,1)$, which by change of
coordinates locally can be seen as a hypersurface. We will then write
this hypersurface as a determinantal singularity of type $(t,t,t)$.

\begin{ex}[$E_7$ singularity]
The simple surface singularity $E_7$ can be defined by the following
equation $y^2+ x(x+z^3)=0$. This can be seen as a determinantal
singularity of type $(2,2,2)$ given by the following matrix:
$\left(\begin{smallmatrix}
 y & x^2+ z^3\\
 -x & y \\
\end{smallmatrix}\right)$. We then perform the Tjurina transform and get:
\begin{align*}
F'_{\{ 1\}}(x,y,z,a_2)=
\begin{pmatrix}
x^2+z^3 -a_2x\\
y -a_2x\\
\end{pmatrix}
\text{ and }
F'_{\{ 2\}}(x,y,z,a_1)=
\begin{pmatrix}
y -a_1(x^2+z^3)\\
-x -a_1y\\
\end{pmatrix}.
\end{align*}
By changing coordinates we see that $F'_{\{ 1\}}$ is equivalent to the
hypersurface $x^2+z^3+w^2x=0$, which has a singular point at $(0,0,0)$,
and $F'_{\{ 2\}}$ is equivalent to the hypersurface $x+v^2(x^2+z^3)=0$
which is non singular. 

So we will continue working in the first chart, and we will denote this
singularity $\Tj(E_7)$. In the coordinates $x^2+z^3+w^2x=0$ the
exceptional divisor $E_1=(\pi^{Tj})\inv(0)$ is given by $x=z=0$. We
now write $\Tj(E_7)$ as the matrix $\left(\begin{smallmatrix}
 x & z^2\\
 -z & x+w^2 \\
\end{smallmatrix}\right)$ and perform the Tjurina transform.
\begin{align*}
F'_{\{ 1\}}(x,z,w,a_2)=
\begin{pmatrix}
z^2 -a_2x\\
x+w^2 +a_2z\\
\end{pmatrix}
\text{ and }
F'_{\{ 2\}}(x,z,w,a_1)=
\begin{pmatrix}
x -a_1z^2\\
-z -a_1(x+w^2)\\
\end{pmatrix}.
\end{align*}
The first chart is equivalent to the hypersurface $z^2+yw^2+y^2z=0$
which has a singularity at $(0,0,0)$, and the second chart is
equivalent to $z+v(vz^2+w^2)=0$ which is smooth. The exceptional
divisor consist of two components the strict transform
of exceptional divisor from before (which we by abuse of notation
denote by $E_1$) is given by $z=y=0$ and the new addition $E_2$ given by
$x=w=0$. They intersect each other in the singular point.

We will continue in the first chart and denote this singularity by
$\Tj^2(E_7)$. It can be given by the matrix $\left(\begin{smallmatrix}
 y & z\\
 -z & yz+w^2 \\
\end{smallmatrix}\right)$ as a determinantal singularity of type
$(2,2,2)$. The Tjurina transform is
\begin{align*}
F'_{\{ 1\}}(y,z,w,a_2)=
\begin{pmatrix}
z -a_2y\\
yz+w^2 +a_2z\\
\end{pmatrix}
\text{ and }
F'_{\{ 2\}}(y,z,w,a_1)=
\begin{pmatrix}
y -a_1z\\
-z -a_1(yz+w^2)\\
\end{pmatrix}.
\end{align*}
I the first chat we have the hypersurface $xy^2+w^2+x^2y=0$ which has
$(0,0,0)$ as its only singular point. The second chart is
$z+v(vz^2+w^2)=0$ which is smooth. The exceptional divisor consist of
$E_1$ given by $z=v=0$ (so it only exists in the second chart), $E_2$
given by $x=w=0$ and the new $E_3$ given by $y=w=0$. $E_1$ and $E_2$
does not meet, but $E_3$ intersects them both, $E_1$ in a smooth
point and $E_2$ in the singular point. 

We represent the singularity $\Tj^3(E_7)$ as the matrix $\left(\begin{smallmatrix}
 xy & w\\
 -w & x+y \\
\end{smallmatrix}\right)$. The Tjurina transform is then
\begin{align*}
F'_{\{ 1\}}(x,y,w,a_2)=
\begin{pmatrix}
w -a_2xy\\
x+y +a_2w\\
\end{pmatrix}
\text{ and }
F'_{\{ 2\}}(x,y,w,a_1)=
\begin{pmatrix}
xy -a_1w\\
-w -a_1(x+y)\\
\end{pmatrix}.
\end{align*}
In the first chart we have the hypersurface $x+y+v^2xy=0$ which is
smooth. The second chart gives the hypersurface singularity
$xy+z^2(x+y)=0$, which has a singular point at $(0,0,0)$. $E_1$ does
not exists in $\Tj^3(E_7)$, but intersects $E_3$ in a smooth point in
the other charts. $E_2$ is given by $x=z=0$, $E_3$ is given by $y=z=0$
and the new $E_4$ is given by $x=y=0$. $E_2$, $E_3$ and $E_4$
intersect each other in the singular point. 

Next we can represent the singularity $\Tj^4(E_7)$ by the matrix $\left(\begin{smallmatrix}
 x & z(x+y)\\
 -z & y \\
\end{smallmatrix}\right)$. The Tjurina transform is then
\begin{align*}
F'_{\{ 1\}}(x,y,z,a_2)=
\begin{pmatrix}
z(x+y) -a_2x\\
y +a_2z\\
\end{pmatrix}
\text{ and }
F'_{\{ 2\}}(x,y,z,a_1)=
\begin{pmatrix}
x -a_1z(x+y)\\
-z -a_1y\\
\end{pmatrix}.
\end{align*}  
The first chart gives the hypersurface $zx-wx-wz^2=0$ which has a
singular point at $(0,0,0)$, and the second chart gives $x-v^2y(x+y)=0$
which is smooth. The exceptional divisor consist of $E_2$ given by
$z=v=0$ so not in the chart that contains the singularity, $E_3$ given
by $z=w=0$, $E_4$ given by $x=w=0$ and $E_5$ given by $X=z=0$. $E_2$
intersects $E_5$ in a smooth point, $E_3$, $E_4$ and $E_5$ intersect
each other in the singular point, and $E_3$ intersects $E_1$ in a
smooth point outside these charts.

We can represent $\Tj^5(E_7)$ by the matrix  $\left(\begin{smallmatrix}
 z & x\\
 w & x-wz \\
\end{smallmatrix}\right)$. In this case the Tjurina transform is
\begin{align*}
F'_{\{ 1\}}(x,z,w,a_2)=
\begin{pmatrix}
x -a_2z\\
x-wz -a_2w\\
\end{pmatrix}
\text{ and }
F'_{\{ 2\}}(x,z,w,a_1)=
\begin{pmatrix}
z -a_1x\\
w -a_1(x-wz)\\
\end{pmatrix}.
\end{align*}  
The first chart gives the hypersurface $yz-wz+yw=0$ which has a
singularity at $(0,0,0)$, and the second chart gives the smooth
hypersurface $w-vx-v^2wx=0$. The exceptional divisor consists of $E_1$ and
$E_2$ that do not appear in any of these charts, $E_3$ given by
$z=v=0$ (so only appearing in the second chart), $E_4$ given by
$w=y=0$, $E_5$ given by $z=y=0$ and $E_6$ given by $w=z=0$. $E_3$
intersects $E_1$ and $E_6$ in different smooth points, $E_2$
intersects $E_5$ in a smooth point, $E_4$, $E_5$ and $E_6$ intersect
each other in the singular point. 

For $\Tj^6(E_7)$ we use the matrix $\left(\begin{smallmatrix}
 y & w\\
 z & z+w \\
\end{smallmatrix}\right)$. We get that the Tjurina transform is
\begin{align*}
F'_{\{ 1\}}(y,z,w,a_2)=
\begin{pmatrix}
w -a_2y\\
z+w -a_2z\\
\end{pmatrix}
\text{ and }
F'_{\{ 2\}}(y,z,w,a_1)=
\begin{pmatrix}
y -a_1w\\
z -a_1(z+w)\\
\end{pmatrix}.
\end{align*}  

The first chart is the smooth hypersurface $z+xy-xz=0$, and the second
chart is $z-vz-y=0$ which is also smooth. So we have reach a resolution
of $E_7$. The exceptional divisors consist of $E_1,\dots,E_7$, where
only $E_4\dots, E_7$ appears in the last two charts. $E_4$ is given by
$y=x-1=0$, $E_5$ is given by $z=v=0$, $E_6$ is given by $z=x=0$ and
$E_7$ is given by $z=y=0$. $E_7$ intersects $E_4$, $E_5$ and $E_6$ in
three different smooth points, $E_2$ intersects $E_5$ in a smooth
point, and $E_3$ intersects $E_1$ and $E_6$ in two different smooth
points. If we represent the exceptional divisor by a dual resolution
graph (where vertices represent the curves and edges represents the
intersection points) we get:
$$
\xymatrix@R=6pt@C=24pt@M=0pt@W=0pt@H=0pt{
\\
\overtag{\Circ}{E_1}{8pt}\lineto[r] &
\overtag{\Circ}{E_3}{8pt}\lineto[r] &
\overtag{\Circ}{E_6}{8pt}\lineto[r] &
\overtag{\Circ}{E_7}{8pt}\lineto[r]\lineto[dd] &
\overtag{\Circ}{E_5}{8pt}\lineto[r] & 
\overtag{\Circ}{E_2}{8pt} & \\
&&&&\\
&&&\undertag{\Circ}{E_4}{2pt}\\
&&&&\hbox to 0pt {\hss}}
$$
which is indeed the $E_7$ graph.
\end{ex}

One can also use this method to produce resolutions of the $D_n$ and $E_6$
singularities, and probably many more. But it is not always possible
to use this method, the $E_8$ given by $x^2+y^3+z^5$ can not be written
as a the determinant of a $2\times 2$ matrix which is $0$ at the origin of
$\C^3$, nor can it be written as the determinant of a larger matrix
such the value at the origin is $0$. If the value at the origin is not zero,
then the Tjurina transform does not improve the singularity, it only
changes variables. 

\bibliography{general}

\end{document}